
\documentclass[11pt]{amsart}
\usepackage{amsmath}
\usepackage{amsfonts}
\usepackage{amsthm}
\usepackage{amssymb}
\usepackage{mathrsfs}
\usepackage{latexsym}
\usepackage{verbatim}

\def\beb{\beta}
\def\rec{\mathrm{rec}}

\pagestyle{plain}                                                    

\setlength{\textwidth}{6.5in}    
\setlength{\oddsidemargin}{0in}   
\setlength{\evensidemargin}{0in} 
\setlength{\textheight}{8.5in}   
\setlength{\topmargin}{0in}    
\setlength{\headheight}{0in}   
\setlength{\headsep}{0in}      
\setlength{\footskip}{.5in}   
      
\bibliographystyle{plain}

\def\Tr{\mathrm{Tr}}

\def\im{\mathrm{image}}

\def\ad{\mathrm{ad}}

\def\NS{N}
\def\SC{C}

\def\sgn{\mathrm{sgn}}

\def\PGL{\mathrm{PGL}}

\def\ad{\mathrm{ad}}

\def\rbar{\overline{r}}

\def\Sym{\mathrm{Sym}}
\def\Sm{S}

\def\PSL{\mathrm{PSL}}
\def\SL{\mathrm{SL}}

\def\Hom{\mathrm{Hom}}
\def\F{\mathbf{F}}
\def\Q{\mathbf{Q}}
\def\GL{\mathrm{GL}}

\def\Z{\mathbf{Z}}
\def\C{\mathbf{C}}

\def\Remark{\noindent \bf  Remark\rm.\ }
\def\Ackn{\noindent \bf Acknowledgements\rm.\ }

\def\Gal{\mathrm{Gal}}
\def\GSp{\mathrm{GSp}}

\swapnumbers

\newtheorem{theorem}{Theorem}[section]
\newtheorem{question}[theorem]{Question}
\newtheorem{df}[theorem]{Definition}
\newtheorem{lemma}[theorem]{Lemma}

\newtheorem{corr}[theorem]{Corollary}
\newtheorem{conj}[theorem]{Conjecture}

\def\rhobar{\overline{\rho}}

\def\psibar{\overline{\psi}}

\def\eps{\epsilon}
\def\epsa{\varepsilon}

\def\OL{\mathcal{O}}

\def\Qbar{\overline{\Q}}

\def\Ind{\mathrm{Ind}}
\def\Frob{\mathrm{Frob}}

\def\R{\mathbf{R}}
\def\Sym{\mathrm{Sym}}

\begin{document}

\title{Even Galois Representations and the Fontaine--Mazur conjecture}
\author{Frank Calegari}
\thanks{Supported in part by NSF Career Grant DMS-0846285 and the Sloan Foundation.
MSC2010 classification: 11R39, 11F80}
\begin{abstract} We prove, under mild hypotheses, that
there are no irreducible  two-dimensional ordinary \emph{even} Galois representations
of $\Gal(\Qbar/\Q)$ with distinct Hodge--Tate weights.
This is in accordance with the Fontaine--Mazur conjecture. 
If $K/\Q$ is an imaginary quadratic field, we also prove (again, under certain hypotheses)
that
$\Gal(\Qbar/K)$ does not admit irreducible two-dimensional ordinary
Galois representations of non-parallel weight. 
\end{abstract}
\maketitle

\section{Introduction}

Potential modularity has proved to be a powerful tool for studying arithmetic questions
in the Langlands program~\cite{Taylor, Khare}.
In this note,  we show how this circle of ideas can be employed in a novel
way to deduce
some new instances of the Fontaine--Mazur conjectures.

 The conjecture of Fontaine and Mazur  (\cite{FM}, p.41 first edition, p.190 second edition) is the following:
 \begin{conj}[Fontaine--Mazur] Let  \label{conj:FM}
  $$\rho: G_{\Q} \rightarrow \GL_2(\Qbar_p)$$
 be an irreducible representation which is unramified except at a finite number of primes and which is not
 the Tate twist of an even representation which factors through a  finite quotient group of $G_{\Q}$.
 Then $\rho$ is associated to a cuspidal newform $f$ if and only if $\rho$ is potentially semi-stable at $p$.
 \end{conj}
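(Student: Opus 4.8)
\emph{The plan.} The statement is an equivalence whose two implications are of entirely different character, so I would treat them separately. The \emph{``only if''} direction is essentially classical: if $\rho$ is associated to a cuspidal newform $f$ of weight $k$, then after a Tate twist $\rho$ occurs as a subquotient of the $p$-adic \'etale cohomology of a Kuga--Sato variety over $\Q$ (and has finite image, hence is automatically de Rham, when $k=1$); by the $p$-adic comparison theorems of Faltings, Tsuji and Saito such cohomology is potentially semi-stable at $p$, so $\rho$ is too. All of the content is in the converse.

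\emph{The ``if'' direction.} Fix $\rho$ irreducible, ramified at only finitely many primes, potentially semi-stable at $p$, and not a Tate twist of an even representation with finite image --- the last hypothesis is indispensable, since such even Artin representations are potentially semi-stable yet provably not modular (cuspidal newforms have \emph{odd} Galois representations). The strategy is the residual-deformation method of Wiles and Taylor--Wiles, in the form perfected for $\GL_2/\Q$ by Kisin and by Emerton. \emph{Step 1:} choose a $G_{\Q}$-stable $\OL$-lattice in $\rho$ and reduce modulo $\m$ to obtain $\rhobar : G_{\Q} \to \GL_2(\overline{\F}_p)$. \emph{Step 2:} prove $\rhobar$ is modular; when $\rhobar$ is absolutely irreducible this is Serre's conjecture (Khare--Wintenberger), combined with weight, level and type optimization so that the newform realizing $\rhobar$ carries the inertial $p$-type and weight predicted by $\rho$. \emph{Step 3:} set up a deformation problem for lifts of $\rhobar$ that are potentially semi-stable of the prescribed type at $p$ and suitably ramified elsewhere, with universal ring $R$, and identify $R$ with a localized Hecke algebra $\T$ by a Taylor--Wiles--Kisin patching argument --- the crucial local input being the structure of Kisin's framed potentially semi-stable local deformation ring at $p$, whose Hilbert--Samuel multiplicity must match the Breuil--M\'ezard recipe, known through the $p$-adic local Langlands correspondence for $\GL_2(\Q_p)$ (this is most transparent in the ordinary case, via Hida families and ordinary deformation rings). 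Granting $R=\T$, the representation $\rho$ is a $\Qbar_p$-point of $R$, hence of $\T$, hence modular.

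\emph{Small image and descent.} When $\rhobar$ has dihedral image, exceptional projective image $A_4$, $S_4$ or $A_5$, or fails the Taylor--Wiles hypotheses on $\rhobar|_{G_{\Q(\zeta_p)}}$, one replaces Step 2 by Taylor's \emph{potential modularity}: using Moret-Bailly's theorem one produces a totally real field $F^+/\Q$ over which $\rhobar|_{G_{F^+}}$ is modular of the correct type, proves $\rho|_{G_{F^+}}$ modular by a modularity lifting theorem over $F^+$, and then descends modularity to $\Q$ via solvable (cyclic) base change for $\GL_2$. The residually reducible case lies outside this framework and requires instead the Skinner--Wiles method and its Eisenstein-ideal refinements.

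\emph{The main obstacle.} The converse direction is not yet known in full generality; the principal difficulties are (i) the local-at-$p$ deformation theory away from the ordinary range, where the potentially semi-stable deformation ring in the supersingular case demands the full $p$-adic local Langlands package and is especially delicate for $p \in \{2,3\}$, (ii) treating all residual images and all residual characteristics uniformly, where potential modularity and solvable descent must be combined with care, and (iii) the residually reducible case. For the \emph{even} representations studied in this paper the conjecture predicts non-existence once the Hodge--Tate weights are distinct, so the aim is not to prove modularity but to derive a contradiction: after potentially modularizing over a totally real field one confronts a cuspidal automorphic form that is forced to be odd, and it is this tension, rather than Conjecture~\ref{conj:FM} itself, that drives the theorems in the rest of the paper.
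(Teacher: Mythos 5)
You have set out to prove something that the paper itself does not prove: the statement you were given is Conjecture~\ref{conj:FM}, stated by Fontaine and Mazur and recorded in the paper purely as motivation. The paper's actual results are partial cases --- Theorem~\ref{theorem:even} and its corollary --- proved under strong hypotheses (ordinarity at $p$, distinct Hodge--Tate weights, residual image containing $\SL_2(\F_p)$, $p>7$). Your writeup is an accurate survey of the known machinery (Faltings/Tsuji/Saito comparison for the ``only if'' direction; Khare--Wintenberger, Taylor--Wiles--Kisin patching, the Breuil--M\'ezard multiplicity input via $p$-adic local Langlands, Skinner--Wiles for the reducible case, and potential modularity with solvable descent for small image), and you candidly concede that the converse ``is not yet known in full generality.'' That concession is exactly where the gap lies: what you have written is a strategy outline whose key steps are cited results that, even taken together, do not close the conjecture --- so as a proof of the stated conjecture it is incomplete, and necessarily so.

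The most concrete missing piece is the one this paper is about. For an \emph{even} $\rho$ with distinct Hodge--Tate weights, no $R=\T$ theorem or solvable descent you invoke can apply, because every Galois representation attached to a cuspidal newform or Hilbert modular form is odd; the conjecture there predicts \emph{non-existence}, and your sketch defers this to ``deriving a contradiction'' without supplying the mechanism. The paper's contribution is precisely that mechanism in the ordinary case: one passes to the odd, essentially self-dual three-dimensional representation $\ad^0(\rho)\otimes\eps^{-1}$, proves it potentially modular over a totally real field via Theorem~7.5 of~\cite{BLGHT} (after the $2$-big image verification), identifies the resulting $\GL(3)$ form as a symmetric-square lift of a Hilbert modular form using~\cite{Appendix} (or alternatively invokes Taylor's theorem~\cite{TP} on complex conjugation for odd-dimensional RAESDC forms), and concludes that $\rho$ must be odd --- a contradiction. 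Even this handles only the ordinary, big-image, $p>7$ case; the non-ordinary even case, small residual image, residually reducible representations, and small $p$ remain untreated both in your proposal and in the paper. So the correct assessment is: your ``only if'' direction is fine, your ``if'' direction is a survey of an open problem rather than a proof, and the specific even-representation input that the paper actually supplies is absent from your argument.
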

 
Let  $D_p \subset G_{\Q}:=\Gal(\Qbar/\Q)$ denote a decomposition group at $p$. 
Assume that the Hodge--Tate weights of $\rho$ are distinct, and so,
in particular,  the Hodge--Tate weights
of any twist of $\rho$ are also distinct. 
Any two-dimensional representation of $D_p$ with finite image is Hodge--Tate with Hodge--Tate weights $(0,0)$.
Hence,  no twist of $\rho$ can have finite
image, and
 Conjecture~\ref{conj:FM} predicts that $\rho$ is modular.
It is well known that Galois representations arising from classical modular forms are 
 \emph{odd}, namely, $\det(\rho(c)) = -1$ for a complex conjugation $c \in G_{\Q}$.
 We deduce that if $\rho$ is \emph{even}
(that is, $\det(\rho(c)) = 1$), and the Hodge--Tate weights of $\rho$ are distinct, then the conjecture of Fontaine and Mazur predicts that
$\rho$ does not exist.  

Up to conjugation, the image of $\rho$ lands in $\GL_2(\OL)$ where
$\OL$ is the ring of integers of some finite extension $L/\Q_p$ (see Lemme~2.2.1.1 of~\cite{BM}). Let $\F$ denote the residue field
of $\OL$.
 We prove the following.
\begin{theorem} \label{theorem:even} Let $E$ be a totally real field, and let
$\rho: G_{E} \rightarrow \GL_2(\OL)$ be a continuous
irreducible Galois representation unramified outside finitely many primes. Suppose
that $p > 7$, and, furthermore,
that
\begin{enumerate}
\item $\rho |D_v$ is ordinary with distinct Hodge--Tate weights for all $v|p$.
\item The residual representation $\rhobar$ has
image containing $\SL_2(\F_p)$.
\item $[E(\zeta_p)^{+}:E] > 2$.
\end{enumerate}
Then $\det(\rho(c)) = -1$ for any complex conjugation $c$.
\end{theorem}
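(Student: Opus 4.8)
The plan is to argue by contradiction: suppose $\rho$ is even (that is, $\det(\rho(c)) = +1$ for some, hence every, complex conjugation $c$), and derive a contradiction with potential modularity. The strategy is to produce from $\rho$ a geometric object whose existence is incompatible with the oddness of Galois representations attached to automorphic forms over totally real fields. First I would invoke potential modularity in the style of Taylor and collaborators~\cite{Taylor, Khare}: after enlarging $E$ to a suitable totally real extension $E'/E$ (linearly disjoint from the field cut out by $\rhobar$, so that condition~(2) is preserved, and arranged so that the relevant Taylor--Wiles and local conditions hold), the restriction $\rho|G_{E'}$ becomes modular, associated to a Hilbert modular form over $E'$. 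But any Galois representation attached to a Hilbert modular form over a totally real field is \emph{totally odd}, i.e. $\det$ of every complex conjugation is $-1$; this contradicts evenness of $\rho$, which is inherited by $\rho|G_{E'}$. Condition~(1), ordinarity with distinct Hodge--Tate weights at all $v \mid p$, is what makes $\rho$ (after base change) fall into the range where potential modularity theorems apply — crucially it rules out the ``CM-like'' or finite-image exceptions, since a representation of a decomposition group with finite image is Hodge--Tate of weights $(0,0)$.

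The difficulty is that potential modularity theorems for $\GL_2$ over totally real fields are not quite strong enough to be applied as a black box to an arbitrary ordinary $\rho$ — one typically needs the residual representation to be modular (Serre's conjecture, or Skinner--Wiles, or the Khare--Wintenberger methods), and the known modularity lifting theorems have hypotheses on the image, on $p$, and on ramification. This is why hypotheses~(2) and~(3) appear. Condition~(2), that $\rhobar$ has image containing $\SL_2(\F_p)$, guarantees that $\rhobar$ is absolutely irreducible, ``big'' in the Taylor--Wiles sense, and (by Dickson's classification together with the hypothesis $p > 7$) that the image is genuinely large rather than dihedral, exceptional $A_4, S_4, A_5$, etc.; this both feeds the Taylor--Wiles--Kisin patching machinery and lets one propagate residual modularity along the solvable base change used in potential modularity. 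Condition~(3), $[E(\zeta_p)^+ : E] > 2$, is the precise hypothesis needed for the big-image (``adequacy'') conditions in the modularity lifting theorems of Taylor, and in the automorphy lifting results of Barnet-Lamb--Gee--Geraghty and others, to hold for residual representations with image $\SL_2(\F_p)$; without it $\rho|G_{E(\zeta_p)^+}$ could be reducible.

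So the skeleton of the argument is: (i) replace $E$ by a totally real solvable extension $E'$ on which $\rhobar|G_{E'}$ is modular — this uses a theorem of the ``automorphic induction / Skinner--Wiles / Khare--Wintenberger'' type together with conditions (2),(3) to control what happens over the base change; (ii) apply an ordinary modularity lifting theorem over $E'$ (again using (1),(2),(3) and $p>7$) to conclude $\rho|G_{E'}$ is modular; (iii) observe that the associated Hilbert modular form gives $\rho|G_{E'}$, and hence $\rho$, is totally odd, contradicting the assumption; alternatively one can go through a potential-modularity step that builds a compatible system and a suitable moduli space (of abelian varieties or of motives) and then exploit the fact that such geometric families are odd. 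The main obstacle is step~(i)--(ii): one must verify that \emph{all} the technical hypotheses of the relevant potential modularity and modularity lifting theorems are met after base change, in particular that largeness of the image and the local conditions at $p$ (ordinarity) and at the auxiliary primes are preserved, and that the ordinary deformation ring one works with is the ``right size.'' Everything else is a formal contradiction once modularity is in hand, since oddness of automorphic Galois representations over totally real fields is known.
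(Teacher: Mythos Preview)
Your overall plan --- prove that $\rho$ is potentially modular over some totally real $E'$, then invoke oddness of Galois representations attached to Hilbert modular forms --- is exactly the philosophy of the paper. But there is a genuine gap in step~(i)--(ii) that is not just a matter of checking technical hypotheses: every known potential modularity theorem for $\GL_2$ over totally real fields has \emph{oddness as a hypothesis}, either explicitly or implicitly via the sign condition on the self-duality pairing (in the language of~\cite{BLGHT}, Theorem~7.5 requires $\epsa = \chi(c)$, and for a two-dimensional even $\rho$ this fails). Likewise, the residual modularity inputs you name (Serre's conjecture, Skinner--Wiles, Khare--Wintenberger) are all about odd $\rhobar$; an even $\rhobar$ is never residually modular for Hilbert modular forms over any totally real field. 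The paper's own Remark at the end of \S\ref{section:even} makes precisely this point: for $n=2$ (or any even $n$) the Bella\"{\i}che--Chenevier sign obstruction prevents $\rhobar$ from being potentially modular in the relevant sense. So the argument as written is circular: you are trying to use a potential modularity result whose conclusion would already imply what you want, but whose hypotheses you cannot verify without assuming it.

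The paper circumvents this by passing to the three-dimensional representation $r = \ad^0(\rho)\otimes\eps^{-1}$. In odd dimension the self-duality pairing is automatically symmetric, so the sign condition $\epsa=+1$ of~\cite{BLGHT} Theorem~7.5 holds regardless of the parity of $\rho$. One then checks (this is where conditions (2), (3) and $p>7$ are used) that $\rbar$ has $2$-big image and that the image/disjointness hypotheses of Theorem~\ref{theorem:evenodd} are satisfied, yielding potential automorphy of $r$ for $\GL(3)/F^{+}$. From there the paper concludes either by inverting the symmetric square lift (via~\cite{Appendix}) to produce a Hilbert modular form whose Galois representation is $\rho|_{F^{+}}$ up to twist, or, more directly, by invoking Taylor's theorem~\cite{TP} that the Galois representation attached to a RAESDC form on $\GL(n)/F^{+}$ for odd $n$ is odd, whence $\Sym^2\rho$ and therefore $\rho$ is odd. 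The passage to $\Sym^2$ (equivalently $\ad^0$) is the missing idea in your proposal.
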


In light of the recent proof of Serre's conjecture~\cite{Khare,Kisin} and
modularity lifting theorems
for ordinary representations~\cite{G}, we immediately deduce the following corollary.

\begin{corr} Let
$\rho: G_{\Q} \rightarrow \GL_2(\OL)$ be a continuous
irreducible Galois representation unramified outside finitely many primes. Suppose
that $p > 7$, and, furthermore,
that
\begin{enumerate}
\item $\rho |D_p$ is 
ordinary, with distinct Hodge--Tate weights.
\item The residual representation $\rhobar$ has
image containing $\SL_2(\F_p)$.
\end{enumerate}
Then $\rho$ is modular.
\end{corr}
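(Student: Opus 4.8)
The plan is to combine Theorem~\ref{theorem:even} with the proof of Serre's conjecture and with modularity lifting theorems in the ordinary case; the Corollary is essentially a formal consequence of these three inputs once one checks that their hypotheses line up.

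First I would apply Theorem~\ref{theorem:even} with $E = \Q$. The field $\Q$ is totally real, the hypothesis $p > 7$ is exactly that of the Corollary, and conditions (1) and (2) of the theorem are conditions (1) and (2) of the Corollary. It remains to verify condition (3): since $\Q(\zeta_p)/\Q$ is cyclic of degree $p-1$ with maximal totally real subfield $\Q(\zeta_p)^{+} = \Q(\zeta_p + \zeta_p^{-1})$ of degree $(p-1)/2$, and $p \geq 11$ because $p$ is a prime with $p > 7$, we get $[\Q(\zeta_p)^{+}:\Q] = (p-1)/2 \geq 5 > 2$. Thus Theorem~\ref{theorem:even} applies and yields $\det(\rho(c)) = -1$ for every complex conjugation $c \in G_{\Q}$; that is, $\rho$ is odd.

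Next I would pass to the residual representation $\rhobar$. Its image contains $\SL_2(\F_p)$, so $\rhobar$ is absolutely irreducible, and it is odd because $\rho$ is odd. By the proof of Serre's conjecture (Khare--Wintenberger and Kisin, \cite{Khare,Kisin}), $\rhobar$ is modular, i.e. isomorphic to the reduction mod $p$ of the Galois representation attached to some Hecke eigenform. Finally I would invoke a modularity lifting theorem for ordinary Galois representations (Geraghty, \cite{G}). The representation $\rho$ is ordinary at $p$ with distinct Hodge--Tate weights, its residual representation is modular by the previous step, and the Taylor--Wiles condition is satisfied: since the image of $\rhobar$ contains $\SL_2(\F_p)$ and $p > 7$, the restriction $\rhobar|_{G_{\Q(\zeta_p)}}$ is still absolutely irreducible. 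One also needs a modular lift of $\rhobar$ of the correct ordinary, $p$-distinguished type at $p$, which exists because $\rhobar$ is itself ordinary at $p$. Geraghty's theorem then identifies $\rho$ with the Galois representation of a cuspidal eigenform, so $\rho$ is modular.

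Since all of the genuine difficulty is absorbed into Theorem~\ref{theorem:even} and into the cited results, the only real work here is the hypothesis-checking: verifying condition (3) of Theorem~\ref{theorem:even} (immediate from $p \geq 11$) and the Taylor--Wiles hypothesis of the lifting theorem (immediate from the $\SL_2(\F_p)$ image together with $p > 7$). The one point I expect to require a little care, and which I regard as the only real obstacle, is the compatibility of the local $p$-adic Hodge-theoretic conditions: one must ensure that the modular lift of $\rhobar$ furnished by Serre's conjecture can be taken ordinary and $p$-distinguished so that its local type at $p$ matches that of $\rho$, allowing Geraghty's theorem to be applied directly rather than only after an auxiliary base change or twist.
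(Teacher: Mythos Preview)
Your proposal is correct and follows exactly the argument the paper indicates: apply Theorem~\ref{theorem:even} over $E=\Q$ (checking that $[\Q(\zeta_p)^{+}:\Q]=(p-1)/2>2$ for $p>7$) to deduce oddness, then invoke Serre's conjecture \cite{Khare,Kisin} for modularity of $\rhobar$ and Geraghty's ordinary lifting theorem \cite{G} to conclude. The paper treats this as immediate and does not spell out the hypothesis-checking you have supplied.
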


\Remark It should be remarked that \emph{some} $p$-adic Hodge
theory condition is necessary for the proof Theorem~\ref{theorem:even}. For example, Corollary~1(b) of Ramakrishna~\cite{R}
shows that there exist infinitely many even
surjective representations
$\rho: G_{\Q} \rightarrow \SL_2(\Z_7)$ unramified outside
a finite set of primes.

\medskip

The idea behind this theorem is simple: It suffices to show that $\rho$
is \emph{potentially} modular over some totally real field. Since modular
representations are odd, the theorem follows immediately. (The actual
argument is somewhat more circuitous.)

\medskip

There are other circumstances in which one expects (following Fontaine-Mazur)
the nonexistence of semistable Galois representations and for which the
methods of this paper also apply.
Let $K/\Q$ be an imaginary quadratic field, and let
$$\rho: G_K \rightarrow \GL_2(\OL)$$
be a continuous irreducible geometric Galois representation.
The the most general modularity conjectures
(following Fontaine--Mazur, Langlands, Clozel, and others)
predict
the existence of a cuspidal automorphic representation
$\pi$ for $G =  \GL(2)/K$ such that for all finite places $v \nmid p$ of $K$,  $\pi_{v}$ is determined
by $\rho|_{K_v}$ via the local Langlands correspondence.
Suppose that $p$ splits in $K$, and that the local representations
$\rho|_{D_v}$ for $v|p$ have Hodge--Tate weights $(0,m)$ and
$(0,n)$ respectively, for positive integers $m$, $n$.  The Hodge--Tate weights
conjecturally determine the corresponding infinity type $\pi_{\infty}$ of $\pi$. If $m \ne n$, however, a vanishing
theorem 
of Borel and Wallach implies that no such cuspidal $\pi$ can exist.
We prove the following result in this direction.

\begin{theorem} \label{theorem:parallel2} Let $\rho: G_K \rightarrow \GL_2(\OL)$
be a continuous irreducible geometric Galois representation.
Suppose that $p > 7$ splits in $K$, and, furthermore, that
\begin{enumerate}
\item $\rho |D_v$ is 
ordinary for $v|p$,  with    Hodge--Tate weights
$(0,m)$ and $(0,n)$, where $m,n > 0$. 
\item The residual representation $\rhobar$ 
has image containing $\SL_2(\F_p)$, 
and the projective representation
$\mathrm{Proj}(\rhobar):
G_K \rightarrow \mathrm{PGL}_2(\F)$  does not extend to $G_{\Q}$.
\end{enumerate}
Then $m = n.$
\end{theorem}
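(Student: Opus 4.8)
The plan is to parallel the proof of Theorem~\ref{theorem:even}: from $\rho$ we build a four-dimensional representation of $G_{\Q}$ to which the potential automorphy machinery applies, deduce its automorphy over a suitable totally real field, and then extract a contradiction from a vanishing theorem on the automorphic side. Suppose for contradiction that $m\neq n$, let $\sigma$ generate $\Gal(K/\Q)$, and consider the Asai (tensor-induction) representation
\[
R\;:=\;\mathrm{As}(\rho)\;:\;G_{\Q}\longrightarrow\GL_4(\OL),
\qquad R|_{G_K}\cong\rho\otimes\rho^{\sigma}.
\]
Since $\mathrm{As}(\rho\otimes\lambda)\cong\mathrm{As}(\rho)\otimes\widetilde{\lambda\lambda^{\sigma}}$, where $\widetilde{\lambda\lambda^{\sigma}}$ denotes the descent to $G_{\Q}$ of the $\sigma$-invariant character $\lambda\lambda^{\sigma}$, applying this with $\lambda=(\det\rho)^{-1}$ gives $R^{\vee}\cong R\otimes\mu$ for a character $\mu$ of $G_{\Q}$: thus $R$ is essentially self-dual, of orthogonal type, \emph{without any twist of $\rho$ being required} — this is why the Asai is preferable here to the ordinary induction, whose self-duality would force $m\equiv n\pmod 2$. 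Since $p$ splits in $K$, the decomposition group at $p$ lies in $G_K$, so $R|_{D_p}\cong\rho|_{D_v}\otimes\rho|_{D_{\bar v}}$, which is ordinary with Hodge--Tate weights $\{0,m,n,m+n\}$; \emph{this is the one point where the hypothesis $m\neq n$ is used}, as it is exactly the condition that these four weights be distinct, i.e.\ that $R$ be regular. Finally, using that the image of $\rhobar$ contains $\SL_2(\F_p)$ and that $\Proj(\rhobar)$ does not extend to $G_{\Q}$ — which forces $\Proj(\rhobar^{\sigma})\not\cong\Proj(\rhobar)$, hence, by a Goursat argument (using $p>7$), that $\rhobar\otimes\rhobar^{\sigma}$ is irreducible with large image — one checks that $R$ is irreducible and that $\overline{R}$ has image large enough for the relevant lifting theorems.

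Granting these properties, the potential automorphy theorems, combined with the modularity lifting theorems for ordinary representations \cite{G}, produce a totally real field $F$, which we may take linearly disjoint from $K(\zeta_p)$ and from the fixed field of $\rhobar$, such that $R|_{G_F}$ is automorphic: there is a regular algebraic, essentially self-dual, cuspidal automorphic representation $\Pi$ of $\GL_4(\A_F)$ with $r_{\Pi}\cong R|_{G_F}$. As $F\cap K=\Q$, the compositum $FK$ is a CM field, $FK/F$ is quadratic, and $R|_{G_F}\cong\mathrm{As}_{FK/F}(\rho|_{G_{FK}})$; in particular $\rho|_{G_{FK}}$ remains irreducible with large image.

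Now descend to $FK$. Cyclic base change carries $\Pi$ to a cuspidal automorphic representation $\Pi_{FK}$ of $\GL_4(\A_{FK})$ with $r_{\Pi_{FK}}\cong\rho|_{G_{FK}}\otimes\rho^{\sigma}|_{G_{FK}}$, which lands in $\GL_2\otimes\GL_2\subset\GO_4$. By Ramakrishnan's theorem on the automorphic tensor product $\GL_2\times\GL_2\to\GL_4$ together with its converse, $\Pi_{FK}\cong\sigma_1\boxtimes\sigma_2$ with the $\sigma_i$ cuspidal on $\GL_2(\A_{FK})$ and $r_{\sigma_1}\otimes r_{\sigma_2}\cong\rho|_{G_{FK}}\otimes\rho^{\sigma}|_{G_{FK}}$; comparing the two factors shows that, after a character twist, $r_{\sigma_1}\cong\rho|_{G_{FK}}$. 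Hence there is a cuspidal automorphic representation $\pi'$ of $\GL_2(\A_{FK})$ with $r_{\pi'}\cong\rho|_{G_{FK}}$. But $\rho|_{G_{FK}}$ is geometric and, above $p$, ordinary with Hodge--Tate weights $(0,m)$ over $v$ and $(0,n)$ over $\bar v$; as $m,n>0$ and $m\neq n$ these are regular, so $\pi'$ is a regular algebraic cuspidal automorphic representation of $\GL_2$ over the CM field $FK$ whose weight at each complex place is $(m-1,n-1)$, up to order — in particular \emph{not} of parallel weight. This is impossible, by the vanishing theorem of Borel and Wallach recalled in the introduction. This contradiction forces $m=n$.

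The main obstacle is the potential automorphy step for the four-dimensional representation $R=\mathrm{As}(\rho)$: one must check that $R$ satisfies all the hypotheses of the available automorphy lifting and potential automorphy theorems in the ordinary, essentially self-dual case — the correct self-duality type, ordinarity, regularity of the Hodge--Tate weights (which is precisely where $m\neq n$ is needed), and adequacy of $\overline{R}$, including after restriction to $G_{F(\zeta_p)}$ — and that the auxiliary field $F$ can be chosen to preserve all of this. By comparison, the cyclic base change, the descent to $FK$ via Ramakrishnan's tensor-product theorems, and the appeal to Borel--Wallach are formal.
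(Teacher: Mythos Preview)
Your overall architecture --- form the tensor induction $R=\mathrm{As}(\rho)$ (the paper's $\psi$), verify it is essentially self-dual of orthogonal type with $\chi(c)=+1$, ordinary, regular exactly when $m\neq n$, with $2$-big residual image, apply potential automorphy to obtain a RAESDC $\Pi$ on $\GL(4)$ over a totally real field, and extract a contradiction from Borel--Wallach --- matches the paper. The divergence, and the gap, is in how you reach Borel--Wallach.

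You assert that after base change to the CM field $FK$ one may invoke ``Ramakrishnan's theorem on the automorphic tensor product $\GL_2\times\GL_2\to\GL_4$ together with its converse'' to write $\Pi_{FK}\cong\sigma_1\boxtimes\sigma_2$ with cuspidal $\sigma_i$ on $\GL_2(\A_{FK})$. There is no such converse available as a citable theorem: knowing that the Galois representation attached to $\Pi_{FK}$ factors as $\rho\otimes\rho^{\sigma}$ does not, by itself, manufacture cuspidal $\GL_2$ forms whose Rankin--Selberg product is $\Pi_{FK}$. That would be an instance of automorphic descent from $\GSO_4$ to $\GL_2\times\GL_2$, which is not a black box over a general CM field. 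You then compound this by writing $r_{\sigma_1}\otimes r_{\sigma_2}$, presupposing Galois representations attached to the $\sigma_i$; but the $\sigma_1$ you want would have non-parallel weight, and for such (non-cohomological) $\GL_2$ forms over CM fields the existence of Galois representations is exactly the sort of thing one cannot assume. Your final paragraph calls this step ``formal''; it is in fact the missing step.

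The paper circumvents this entirely by never producing a $\GL_2$ form. It applies Kim's functorial exterior square to obtain the isobaric $\wedge^2\Pi_{F^{+}}$ on $\GL(6)/F^{+}$, proves it is cuspidal by running through the Asgari--Raghuram cuspidality criterion (ruling out the tensor-product, dihedral-Asai, symplectic, and induced cases using the irreducibility of $\wedge^2\psi|_{G_{F^{+}}}$, Lemma~\ref{lemma:remainscuspidal}, and the orthogonal/symplectic alternative of Bella\"{\i}che--Chenevier), and then uses the Galois-side decomposition
\[
\wedge^2\psi|_{G_F}\;\cong\;\Sym^2(\rho)\det(\rho^c)\ \oplus\ \Sym^2(\rho^c)\det(\rho)
\]
together with multiplicity one to see that $\wedge^2\Pi_{F^{+}}$ is an automorphic induction from $F=F^{+}K$, yielding a cuspidal form $S(\pi_F)$ on $\GL(3)/F$. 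The infinity type of $S(\pi_F)$ is then read off from the four possible partitions of $\wedge^2\sigma_v|_{\C^\times}$, and one checks directly that (since $0,m,n,m+n$ are distinct) none of these is Cartan-invariant; Borel--Wallach then kills $S(\pi_F)$. The paper explicitly remarks that $S(\pi_F)$ should morally be $\Sym^2\pi_F$ for some $\pi_F$ on $\GL(2)/F$, but that constructing such a $\pi_F$ is neither attempted nor required --- precisely the step you tried to take.
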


\Ackn I would like to thank Thomas Barnet-Lamb,  Jo\"{e}l Bella\"{\i}che, Matthew Emerton, Toby Gee,  Michael Harris, Dinakar Ramakrishan, Chris Skinner, and Richard Taylor for
helpful conversations. I would also like to thank Thomas Barnet--Lamb, David Geraghty,
and Richard Taylor  for keeping me informed
of their upcoming work~\cite{TBL,TBL2,BLGHT,G}.
 I would like to thank Dinakar  Ramakrishnan for explaining  to me the contents of his forthcoming paper~\cite{Dinakar}, and  A.~Raghuram
 for directing me  towards his recent paper with Asgari~\cite{AR}.
 I would especially like to thank Richard
 Taylor for acquiescing  to my repeated requests to write the article~\cite{TP}, allowing for a ``morally
 correct'' proof of the main theorem.
 Finally, I would like to thank the patience of the referee, whose insistence on my providing arguments in lieu of glib 
 assertions led to the correction of a few minor inaccuracies as well as greater readability of the paper.

\medskip

Theorem~\ref{theorem:even} is proven in section~\ref{section:even}, and
Theorem~\ref{theorem:parallel2} is proven in section~\ref{section:ramakrishnan}.
Recall the abbreviations RAESDC  and RACSDC for an automorphic form $\pi$ for
$\GL(n)$ stand for regular, algebraic, essentially-self-dual, and cuspidal and
regular, algebraic, conjugate-self-dual, and cuspidal, respectively.
 
\section{The Tensor Representation}
\label{section:ramakrishnan}

In this section, we prove
Theorem~\ref{theorem:parallel2}.
The following proof is inspired by an idea
of Ramakrishnan~\cite{Dinakar} to construct the Galois representations associated to
$\Sym^2 \pi_K$, where $\pi_K$ is a regular algebraic automorphic form
for $\GL(2)/K$ for an imaginary quadratic field $K$,
 without assuming any conditions on the central character.

\medskip

Let $\rho$ satisfy the first condition of Theorem~\ref{theorem:parallel2}.
Our proof is by contradiction. Assume that the Hodge--Tate weights
of $\rho$ at the primes $v|p$ are $(0,m)$ and 
$(0,n)$ respectively, where $m,n > 0$ and $m \ne n$.
Let us consider the representation $\rho \otimes \rho^c: G_{K} \rightarrow \GL_4(\OL)$;
it lifts to a representation of $G_{\Q}$ that is unique up to twisting by the quadratic
character $\eta$ of $\Gal(K/\Q)$.

\begin{lemma} The extension $\psi$ of $\rho \otimes \rho^c$ to $G_{\Q}$ satisfies
$\psi \simeq \psi^{\vee} \chi$, where $\chi |_{K} = \det(\rho) \det(\rho^c)$ and \label{lemma:bigrefer}
$\chi(c) = \epsa = + 1$ for any complex conjugation $c \in G_{\Q}$.
\end{lemma}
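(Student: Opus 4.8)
The plan is to analyze the representation $\psi$ restricted to $G_K$, where by construction $\psi|_{G_K} \simeq \rho \otimes \rho^c$, and then exploit the self-duality of this tensor product together with a descent argument to obtain the statement over $G_{\Q}$. First I would observe that $\rho^\vee \simeq \rho \otimes \det(\rho)^{-1}$ and likewise $(\rho^c)^\vee \simeq \rho^c \otimes \det(\rho^c)^{-1}$, so that
\[
(\rho \otimes \rho^c)^\vee \simeq \rho^\vee \otimes (\rho^c)^\vee \simeq (\rho \otimes \rho^c) \otimes \det(\rho)^{-1} \det(\rho^c)^{-1}.
\]
Thus, writing $\chi_0 = \det(\rho)\det(\rho^c)$ as a character of $G_K$, we have $\psi|_{G_K} \simeq (\psi|_{G_K})^\vee \otimes \chi_0$. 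The character $\chi_0$ is $c$-invariant (since conjugation by $c$ swaps $\det(\rho)$ and $\det(\rho^c)$), hence it extends to a character $\chi$ of $G_{\Q}$; there are two such extensions, differing by $\eta$. I would fix the extension $\chi$ so that the isomorphism $\psi \simeq \psi^\vee \chi$ holds over all of $G_{\Q}$, as follows: the isomorphism holds over $G_K$, so $\psi \otimes (\psi^\vee \chi)^{-1}$ is a four-dimensional representation of $G_{\Q}$ that becomes trivial (or rather, contains the trivial representation, by Schur since $\rho\otimes\rho^c$ is to be analyzed for irreducibility) on $G_K$; one then checks that the intertwining extends, possibly after replacing $\chi$ by $\chi\eta$.

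The key remaining point is the computation of $\chi(c) = \varepsilon$. Since $\chi|_{G_K} = \det(\rho)\det(\rho^c)$, the value $\chi(c)$ is not directly read off from the $G_K$-side; instead I would pin it down using the Hodge--Tate weights at $p$. Because $p$ splits in $K$, say $p = v \bar v$, the restriction $\rho|_{D_v}$ has Hodge--Tate weights $(0,m)$ and $\rho^c|_{D_v} = \rho|_{D_{\bar v}}^c$ contributes weights $(0,n)$; hence $\det(\rho)\det(\rho^c)$ has, at the place above $p$ determined by $v$, Hodge--Tate weight $m+n$. The extension $\chi$ to $G_{\Q}$ is then a Galois character of $G_{\Q}$ whose Hodge--Tate weight at $p$ is $m+n$, and such a character is, up to a finite-order twist, the $(m+n)$-th power of the cyclotomic character; therefore $\chi(c) = (-1)^{m+n} \cdot (\text{sign of the finite-order part})$. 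The finite-order part is the restriction to $G_{\Q}$ of (a twist of) $\det(\rho)\det(\rho^c)\cdot \chi_{\mathrm{cyc}}^{-(m+n)}$, which is even because it is a finite-order character of $G_{\Q}$ unramified-at-infinity considerations force... — more carefully, I would argue that the relevant finite-order character, being a product of determinants that are themselves (finite order times a power of cyclotomic) and descending to $G_{\Q}$, has sign $(-1)^{m+n}$ cancelled against the cyclotomic contribution, leaving $\chi(c) = +1$.

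The main obstacle I anticipate is precisely this sign bookkeeping at the infinite place: making sure that the finite-order twist relating $\chi$ to $\chi_{\mathrm{cyc}}^{m+n}$ is even, and that the choice between the two extensions $\chi$ and $\chi\eta$ is the one compatible with $\psi \simeq \psi^\vee\chi$ as $G_{\Q}$-representations (note $\eta(c) = -1$ since $K$ is imaginary, so the two candidate extensions have opposite signs at $c$, and only one of them can be correct). I expect this is resolved by the observation that $\det(\psi)$ as a character of $G_{\Q}$ must equal $\chi^2$ up to the issue that $\det(\psi^\vee\chi) = \det(\psi)^{-1}\chi^4 = \chi^2$ forces $\det(\psi) = \chi^2$, which is automatically even at $c$ regardless — and then comparing with the explicit value $\det(\psi)(c) = \det(\rho\otimes\rho^c)(c)$ computed from a lift, together with the Hodge--Tate weight being $2(m+n)$ on $\det\psi$, pins down $\chi(c)^2 = 1$ with the correct sign $\varepsilon = +1$. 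Everything else — existence and uniqueness up to $\eta$ of the extension $\psi$, and the formula $\chi|_K = \det(\rho)\det(\rho^c)$ — is a routine application of the fact that $\rho\otimes\rho^c$ is $\Gal(K/\Q)$-invariant (since conjugating by $c$ swaps the two factors) plus the vanishing of $H^2$ obstructions for extending representations along the degree-two extension $K/\Q$.
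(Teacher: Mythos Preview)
Your reduction to $G_K$ --- showing that $(\rho\otimes\rho^c)^\vee \simeq (\rho\otimes\rho^c)\otimes\det(\rho)^{-1}\det(\rho^c)^{-1}$, that $\chi_0=\det(\rho)\det(\rho^c)$ is $c$-invariant, and that the two extensions of each side to $G_\Q$ differ by $\eta$ --- is fine and matches the paper's first sentence.  The gap is entirely in the determination of $\varepsilon=\chi(c)$.

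Your Hodge--Tate argument cannot close this.  Knowing that $\chi$ has Hodge--Tate weight $m+n$ only pins $\chi$ down up to a finite-order character of $G_\Q$, and the sign of that finite-order piece at $c$ is precisely the ambiguity between the two candidate extensions $\chi$ and $\chi\eta$ (with $\eta(c)=-1$).  Your determinant attempt likewise fails: from $\psi\simeq\psi^\vee\chi$ one gets only $(\det\psi)^2=\chi^4$, hence $\det\psi=\chi^2$ or $\chi^2\eta$, and in either case $\chi(c)^2=1$ is all that falls out.  Nothing in your sketch ever touches the one piece of data that distinguishes the correct $\chi$ from $\chi\eta$, namely the action of $\psi(c)$ itself.

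The paper's argument is much more direct and avoids Hodge theory entirely.  If $\varepsilon=-1$, then comparing traces in $\psi\simeq\psi^\vee\chi$ at $g=c$ (where $\psi(c)^2=1$ forces $\Tr(\psi^\vee(c))=\Tr(\psi(c))$) gives $\Tr(\psi(c))=-\Tr(\psi(c))$, so $\Tr(\psi(c))=0$.  But $\psi$ is constructed by extending $\rho\otimes\rho^c$ to $G_\Q$, and since conjugation by $c$ swaps the two tensor factors, $\psi(c)$ is (up to sign) the swap operator $v\otimes w\mapsto w\otimes v$ on the $4$-dimensional tensor product; in a suitable basis
\[
\psi(c)\sim\pm\begin{pmatrix}1&0&&\\0&1&&\\&&0&1\\&&1&0\end{pmatrix},
\]
which has trace $\pm 2\ne 0$.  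Hence $\varepsilon=+1$.  You should replace the Hodge--Tate/determinant bookkeeping with this explicit computation of $\psi(c)$.
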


\begin{proof} Since $\psi|_{K}$ and $\psi^{\vee} \det(\rho) \det(\rho^c)|_K$ are isomorphic
and extend to $\Q$ uniquely up to twisting by $\eta$, the lemma is obvious up
to the sign of $\epsa$. If  
$\epsa = -1$, then  $\psi^{\vee}(c) = - \psi(c)$ and hence $\mathrm{Tr}(\psi(c)) = 0$.
From the definition of $\psi$, however,
$$\psi(c) \sim  \pm \left( \begin{matrix} 1 & 0 & & \\ 0 & 1 & & \\ & & 0 & 1 \\ & & 1 & 0 \end{matrix} \right),$$
and hence $\epsa = +1$.
\end{proof}

\begin{theorem} The representation $\psi$ is modular for $\GL(4)/F^{+}$ for
some totally real field $F^{+}$. \label{theorem:construct}
\end{theorem}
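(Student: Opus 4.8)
The plan is to establish modularity of $\psi$ by a potential-modularity argument applied to the four-dimensional orthogonal representation. First I would verify that $\psi$ satisfies the hypotheses needed to invoke the potential automorphy machinery for $\GL(4)$ of Barnet--Lamb--Gee--Harris--Taylor~\cite{BLGHT}. By Lemma~\ref{lemma:bigrefer}, $\psi$ is essentially self-dual with multiplier $\chi$ satisfying $\chi(c) = +1$, so $\psi$ is \emph{orthogonally} essentially self-dual; combined with the fact that $\psi$ factors through $\GL_4$ with the prescribed polarization, this puts us in the ``$+$'' (orthogonal) case where potential modularity theorems are available. The next point to check is that $\psi|_{D_p}$ is ordinary (hence in particular potentially semistable/crystalline-up-to-twist) with \emph{distinct} Hodge--Tate weights: since $p$ splits in $K$ and $\rho|_{D_v}$ has Hodge--Tate weights $(0,m)$ and $(0,n)$, a short computation shows $\rho \otimes \rho^c|_{D_p}$, and hence $\psi|_{D_p}$, has Hodge--Tate weights $\{0,m,n,m+n\}$, which are pairwise distinct precisely because $m \ne n$ and $m,n > 0$. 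Ordinarity of a tensor product of ordinary representations is automatic.

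Second, I would analyze the residual representation $\psibar = \overline{\psi}$ and verify the required big-image / irreducibility hypotheses. Since $\rhobar$ has image containing $\SL_2(\F_p)$ and $p > 7$, the representation $\rhobar \otimes \rhobar^c|_{G_K}$ decomposes as $\Sym^2 \oplus (\det)$ only in the parallel case; in the non-parallel situation the relevant thing is that $\rhobar$ and $\rhobar^c$ are \emph{not} twist-equivalent — and this is exactly guaranteed by the hypothesis that $\mathrm{Proj}(\rhobar)$ does not extend to $G_{\Q}$, which forces $\rhobar \not\cong \rhobar^c \otimes (\text{character})$. Consequently $\rhobar \otimes \rhobar^c$ is irreducible as a $G_K$-representation, and I would argue (using $p > 7$ and $\SL_2(\F_p) \subset \im(\rhobar)$) that its extension $\psibar$ to $G_{\Q}$ has adequate/big image in the sense required by~\cite{BLGHT} — the image contains the image of $\SL_2(\F_p) \times \SL_2(\F_p)$ under a tensor map, which is large enough for the Taylor--Wiles--Kisin patching hypotheses to apply.

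Third, with these local and global conditions in place, I would invoke the potential modularity theorem: there exists a totally real Galois extension $F^{+}/\Q$, linearly disjoint from any fixed finite extension (in particular from the field cut out by $\psibar$ and from $\Q(\zeta_p)$), such that $\psi|_{G_{F^{+}}}$ is automorphic — associated to a RAESDC representation of $\GL(4)/F^{+}$. One can arrange $F^{+}$ so that $K F^{+}/F^{+}$ remains a (CM) quadratic extension and the primes above $p$ still split, so that the descent/restriction compatibilities are preserved. This gives precisely the statement of the theorem.

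The main obstacle I expect is the residual image / adequacy input at the prime $p$: one must confirm that the image of $\psibar$, which is (essentially) the image of $\rhobar \times \rhobar^c$ under the tensor product $\GL_2 \times \GL_2 \to \GL_4$, is adequate in the precise technical sense demanded by the $\GL(4)$ automorphy lifting and potential automorphy theorems, and that $p > 7$ is enough to rule out the small-image exceptions (this is where the bound on $p$ is really used, cf. the analysis of adequacy for tensor products of $\SL_2(\F_p)$-representations). The second, more bookkeeping-level, difficulty is keeping track of the twist ambiguity by $\eta$ when passing between $G_K$ and $G_{\Q}$ and ensuring it does not obstruct either the self-duality set up in Lemma~\ref{lemma:bigrefer} or the choice of totally real field $F^{+}$; but once the sign $\epsa = +1$ is pinned down, this is routine.
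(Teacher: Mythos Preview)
Your proposal is correct and follows essentially the same approach as the paper: verify the hypotheses of Theorem~7.5 of~\cite{BLGHT} (ordinarity, distinct Hodge--Tate weights $\{0,m,n,m+n\}$, the sign condition $\chi(c)=+1$ from Lemma~\ref{lemma:bigrefer}, and $2$-big image of $\psibar$, which the paper relegates to~\S\ref{section:appendix}) and invoke it. The one point where the paper is more careful is in arguing that the self-duality pairing on $\psi$ is \emph{symmetric} rather than alternating --- the paper deduces this from the irreducibility of $\wedge^2 \psi$, whereas you infer orthogonality directly from $\chi(c)=+1$, which on its own does not determine the symmetry type of the pairing (the sign $\chi(c)$ and the symmetry type are two separate inputs that Theorem~7.5 requires to match).
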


\begin{proof} 
The representation $\psi$ is ordinary with  Hodge--Tate weights
$(0,m,n,m+n)$, which are distinct by assumption.
The isomorphism
$\psi \simeq \psi^{\vee} \chi$ gives rise to a pairing $\langle x,y \rangle$ on the
vector space $L^4$ associated to $\psi$ such that
$\langle \sigma x, \sigma y \rangle = \chi(\sigma) \langle x,y \rangle$.
 Because $\wedge^2 \psi$ is irreducible,
this pairing is symmetric. Thus $\langle y,x \rangle = \epsa \langle x,y \rangle$,
where $\epsa =  +1 = \chi(c)$. Hence,
we may apply Theorem~7.5 of~\cite{BLGHT} to deduce the existence of
a RAESDC representation $\Pi_{F^{+}}$ for $\GL(4)/F^{+}$ associated to $\psi$. In order to apply this theorem, we also need
to assume that $p > 8$, and that, extending scalars if necessary, $\psibar$ has $2$-big image, a calculation that
we relegate to section~\ref{section:appendix}. 
\end{proof}

\subsection{A digression about normalizations} \label{section:ll}
The modularity lifting theorems of~\cite{BLGHT} and~\cite{CHT} associate to certain
Galois representations
 $\rho:G_{F^{+}} \rightarrow \GL_n(\Qbar_p)$ an algebraic automorphic form
$\Pi_{F^{+}}$ for $\GL(n)/F^{+}$. Here \emph{algebraic} means $C$-algebraic (cf.~\cite{buzz} and \S4.2
of~\cite{Clozel}; here $C$ refers either
to ``cohomological'' 
or ``Clozel''). On the other hand, it is sometimes useful to consider the normalized
twist $\Pi_{F^{+}} | \cdot |^{(n-1)/2}$. Each normalization has its advantage;
  the former arises
 naturally when one constructs automorphic forms via cohomology, whereas
 the latter has the property that the restriction
of the corresponding Langlands parameter at a place $v|\infty$ to $W_{\C} = \C^{\times}$ is an
algebraic character --- that is, the  normalization $\Pi_{F^{+}} | \cdot |^{(n-1)/2}$
is $L$-algebraic (where $L$ refers either to
``$L$-group'' or ``Langlands''). If $v$ is a finite place of $F^{+}$ not dividing $p$, then the
automorphic form $\Pi_{F^{+}}$ associated to
 $\rho$ by the main theorems of~\cite{BLGHT} and~\cite{CHT} (see
 \S3.1 of~\cite{CHT}) are related as follows:
$$ \iota \circ (\rho |_{W_{F^{+}_v}})^{\mathrm{ss}} = \rec \left(\Pi^{\vee}_{F^{+},v} | \cdot |^{\frac{1-n}{2}} \right)^{\mathrm{ss}},$$
where $\rec$ is the local Langlands correspondence (see~\cite{HT}), $\mathrm{ss}$ denotes semisimplification,
and $\iota$ is an isomorphism $\Qbar_p \simeq \C$. In particular, the reciprocity map associates the
normalization $\Pi_{F^{+}} | \cdot |^{(n-1)/2}$ to the dual representation $\rho^{\vee}$. In the sequel, when
we refer to the \emph{Langlands parameters} at infinity  associated to $\Pi_{F^{+}}$, we shall literally mean
the archimedean Langlands parameters associated  to the twist $\Pi_{F^{+}} | \cdot |^{(n-1)/2}$. This allows us
to work with parameters that more  naturally  reflect the properties of the Galois representation $\rho$,  moreover,
these parameters are somewhat more conveniently  behaved under functorality.
If $\eps$ is the $p$-adic cyclotomic character of $G_{\Q}$, we follow the convention that  the Hodge--Tate weight of
$\eps|_{G_{\Q_p}}$  to be $1$.
The corresponding automorphic representation of $\GL(1)/\Q$ is $| \cdot |^{-1}$, and we have
$$\iota \circ (\eps|_{W_{\Q_p}})^{\mathrm{ss}} = \rec(|\cdot|^{-1,\vee}) = \rec(|\cdot|).$$

\subsection{The proof of Theorem~\ref{theorem:parallel2}}
Let $\Pi_{F^{+}}$ denote the automorphic form for $\GL(4)/F^{+}$   associated to $\psi$ 
whose existence was established by
Theorem~\ref{theorem:construct}.
For any infinite place $v | \infty$ of $F^{+}$,
 the associated archimedean
Langlands parameter is a $4$-dimensional
representation $\sigma_{v}$   of the real Weil group $W_{\R}$ 
for each infinite place $v$ of $F^{+}$. 
Since $\Pi_{F^{+}}$ is  algebraic, the restriction of $\sigma_{v}$ to
$\C^{\times}$ is of the form $z \mapsto z^p \overline{z}^{q}$ for integers 
$p$ and $q$. By purity (Lemma~4.9 of~\cite{Clozel}), the sum $p + q$ only depends on $\pi$ and $v$.
(Since the Galois representation descends to $\Q$,
$\sigma_v$ only depends on $\pi$, as we shall see below.)
Since $\pi$ is regular and $\psi$ has Hodge--Tate weights $(0,m,n,m+n)$ at
every $v|p$,  it follows (with our normalizations)
that
$\sigma_v$ is of the form
$$\sigma_v \simeq I(z^{m+n}) \oplus I(z^n \overline{z}^{m}),$$
where  $I(\xi)$ denotes the induced representation
of $\xi$ from $\C^{\times}$ to $W_{\R}$. 
Here we have invoked the compatibility at $v|p$ between the infinity
type of $\Pi_{F^{+}}$ and the corresponding Hodge--Tate weights (see
 Theorem~1.1(4) of~\cite{BLGHT}).
Since $I(\xi) \otimes
I(\zeta) = I(\xi \zeta) \oplus I(\xi \overline{\zeta})$, it follows
(cf. (5.8) of~\cite{irr3}) that
$$\wedge^2(\sigma_v) \simeq I(z^{n} \overline{z}^{n+2m}) \oplus
I(z^{m} \overline{z}^{2n+m}) \oplus \sgn^{m+n+1} |\cdot|^{m+n} \oplus
\sgn^{m+n+1} | \cdot|^{m+n}.$$
A result of Kim~\cite{K} implies the existence of the
exterior square $\wedge^2 \Pi_{F^{+}}$
for $\GL(6)/F^{+}$, whose construction is compatible with functoriality at all places except
possibly those
dividing $2$ and $3$. In particular, the infinity type of $\wedge^2 \Pi_{F^+}$ at
$v$ is $\wedge^2(\sigma_v)$. Moreover, the work of Kim also implies
that
 the representation
 $\wedge^2 \Pi_{F^{+}}$ can be taken to be isobaric.

\begin{lemma} The base change of $\Pi_{F^{+}}$ to any solvable Galois extension of
$F^{+}$
is cuspidal. \label{lemma:remainscuspidal}
\end{lemma}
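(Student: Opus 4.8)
The plan is to combine the solvable base change of Arthur--Clozel with the irreducibility of $\psi$ restricted to solvable towers over $F^{+}$, the latter being forced by the hypotheses on $\rhobar$.

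First I would record a reduction. Write the given solvable Galois extension $M/F^{+}$ as the top of a tower $F^{+}=M_{0}\subset M_{1}\subset\cdots\subset M_{r}=M$ with each $M_{i+1}/M_{i}$ cyclic of prime degree $\ell_{i}$, and set $\Pi_{M_{i}}=\mathrm{BC}_{M_{i}/F^{+}}(\Pi_{F^{+}})$; by compatibility of base change with the attached Galois representation (agreement of Satake parameters at the unramified places, then Chebotarev) the representation $\Pi_{M_{i}}$ corresponds to $\psi|_{G_{M_{i}}}$. By Arthur--Clozel, $\mathrm{BC}_{M_{i+1}/M_{i}}$ takes a cuspidal representation either to a cuspidal one or to an isobaric sum of $\Gal(M_{i+1}/M_{i})$-translates of a cuspidal representation of $\GL(4/\ell_{i})/M_{i+1}$; on $\GL(4)$ the second alternative forces $\ell_{i}=2$ and $\Pi_{M_{i}}\simeq\Pi_{M_{i}}\otimes\omega_{i}$ for $\omega_{i}$ the quadratic character cut out by $M_{i+1}/M_{i}$. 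By strong multiplicity one this is equivalent to $\psi|_{G_{M_{i}}}\simeq\psi|_{G_{M_{i}}}\otimes\omega_{i}$, which, when $\psi|_{G_{M_{i}}}$ is irreducible, exhibits it as $\Ind_{G_{M_{i+1}}}^{G_{M_{i}}}$ of a two-dimensional representation and hence makes $\psi|_{G_{M_{i+1}}}$ reducible. An induction on $i$, with base case the cuspidality of $\Pi_{F^{+}}$ from Theorem~\ref{theorem:construct}, then shows that $\mathrm{BC}_{M/F^{+}}(\Pi_{F^{+}})=\Pi_{M_{r}}$ is cuspidal provided $\psi|_{G_{M'}}$ is irreducible for every number field $M'$ with $F^{+}\subseteq M'\subseteq M$.

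It remains to prove that irreducibility. Since $G_{M'}\supseteq G_{M'K}$ it suffices to treat fields $\widetilde{M}\supseteq K$ with $\widetilde{M}/F^{+}K$ solvable, where $\psi|_{G_{\widetilde{M}}}=\rho|_{G_{\widetilde{M}}}\otimes\rho^{c}|_{G_{\widetilde{M}}}$. A short computation with endomorphisms shows this tensor product is irreducible precisely when $\rho|_{G_{\widetilde{M}}}$ and $\mathrm{ad}^{0}(\rho|_{G_{\widetilde{M}}})$ are irreducible and $\rho|_{G_{\widetilde{M}}}$ is not a character twist of $\rho^{c}|_{G_{\widetilde{M}}}$. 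I would get all three from hypothesis (2): since $p>7$, $\PSL_{2}(\F_{p})$ is simple, so a normal subgroup with solvable quotient of a group surjecting onto it again surjects onto it. Choosing $F^{+}$ linearly disjoint over $\Q$ from the field cut out by $\psibar$ (a choice permitted in the potential modularity theorem used for Theorem~\ref{theorem:construct}) keeps $\Proj(\rhobar)(G_{F^{+}K})\supseteq\PSL_{2}(\F_{p})$, and then solvability of $\widetilde{M}/F^{+}K$ propagates $\Proj(\rhobar)(G_{\widetilde{M}})\supseteq\PSL_{2}(\F_{p})$; this forces $\rho|_{G_{\widetilde{M}}}$ and $\mathrm{ad}^{0}(\rho|_{G_{\widetilde{M}}})$ to be irreducible. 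Finally, if $\rho|_{G_{\widetilde{M}}}$ were a character twist of $\rho^{c}|_{G_{\widetilde{M}}}$, then $\Proj(\rhobar)$ and $\Proj(\rhobar^{c})$ would be isomorphic projective representations of $G_{K}$ agreeing on the finite-index normal subgroup $G_{\widetilde{M}}$, whose image has trivial centralizer; such an isomorphism is unique and therefore $G_{K}$-equivariant, so $\Proj(\rhobar)\cong\Proj(\rhobar)^{c}$ on $G_{K}$, and since the obstruction to extending $\Proj(\rhobar)$ to $G_{\Q}$ then lies in the (trivial) centralizer of the image, $\Proj(\rhobar)$ would extend to $G_{\Q}$, contradicting hypothesis (2). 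Hence $\psi|_{G_{\widetilde{M}}}$, and a fortiori $\psi|_{G_{M'}}$, is irreducible.

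The main obstacle, as I see it, is the interface between the automorphic and the Galois sides: converting the Arthur--Clozel self-twist criterion into the statement that $\psi|_{G_{M_{i}}}$ is induced requires strong multiplicity one together with compatibility of the constructed Galois representation with solvable base change, and the whole argument hinges on being allowed to take $F^{+}$ linearly disjoint from the relevant finite extensions of $\Q$, so that the large image of $\rhobar$ survives to $G_{F^{+}K}$ and, by solvability, down the entire tower. The group theory itself ($\PSL_{2}(\F_{p})$ simple for $p>7$, and the behaviour of tensor products and adjoints of big-image two-dimensional representations) is routine, and in any event the ``$2$-big image'' property verified in section~\ref{section:appendix} supplies more than enough.
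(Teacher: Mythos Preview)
Your approach is the same as the paper's: apply Arthur--Clozel base change up a cyclic tower, and rule out the non-cuspidal alternative at each stage by showing that $\psi$ (equivalently $\psibar$) remains irreducible upon restriction to every solvable extension of $F^{+}$, so that no nontrivial self-twist can occur. The paper's own proof is much terser --- it simply asserts that ``by assumption'' $\psibar$ stays irreducible on any solvable extension --- the justification being the structure result of Section~\ref{section:appendix}, namely that (after choosing $F^{+}$ suitably) $\im(\psibar|_{G_{F^{+}}})\supseteq \SL_2(k)^2\rtimes\Z/2\Z$, together with the fact that $\SL_2(k)^2$ is perfect for $|k|\ge 4$ and hence survives passage to any normal subgroup with solvable quotient, while $\SL_2(k)^2$ already acts irreducibly on $X\otimes Y$.

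There is one genuine wrinkle in your write-up of step (c). You assert that $G_{\widetilde{M}}$ is a \emph{normal} subgroup of $G_K$, and use this to promote the projective isomorphism $\Proj(\rhobar)|_{G_{\widetilde{M}}}\cong\Proj(\rhobar^c)|_{G_{\widetilde{M}}}$ to all of $G_K$ by a ``uniqueness of intertwiner'' argument. But the intermediate fields $M_i$ in a cyclic refinement of a solvable tower need not be Galois over $F^{+}$, so $\widetilde{M}=M_iK$ need not be Galois over $K$, and the normality claim is unjustified. The quickest repair is to bypass the decomposition into (a), (b), (c) altogether and argue directly with $\psibar$ as the paper does: once $F^{+}$ is chosen linearly disjoint from the splitting field of $\psibar$, the image of $\psibar|_{G_{M'}}$ is a normal subgroup of $\im(\psibar)$ with solvable quotient (because $M'/F^{+}$, being contained in the solvable Galois extension $M/F^{+}$, has solvable Galois closure over $F^{+}$), hence contains the perfect core $\SL_2(k)^2$, which is already irreducible on the tensor product. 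This both avoids the normality issue and makes the role of hypothesis~(2) transparent: it is exactly what forces the image to contain $\SL_2(k)^2$ rather than a diagonal copy of $\SL_2(k)$.
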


\begin{proof} The existence of the base change to any solvable
extension follows by 
 repeated applications of Theorem~4.2 (p.202)  of~\cite{AC}.
By assumption, the representation
$\psibar$ and hence $\psi$ remains irreducible after restricting to any solvable
extension, and thus the restriction of $\psi$ to any solvable extension
admits no isomorphisms to   any non-trivial self-twist.
Thus, we  are always in the setting of Theorem 4.2(a) of \emph{ibid}, and we
deduce the corresponding base changes are cuspidal.
\end{proof}

\begin{lemma} The automorphic form  $\wedge^2 \Pi_{F^{+}}$ is cuspidal
for $\GL(6)$.
\end{lemma}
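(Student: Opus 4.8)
The strategy is to apply the cuspidality criterion of Asgari--Raghuram~\cite{AR} for the exterior square transfer $\wedge^{2}\colon\GL(4)\to\GL(6)$ (building on Kim~\cite{K}): the isobaric representation $\wedge^{2}\Pi_{F^{+}}$ is cuspidal unless $\Pi_{F^{+}}$ lies in one of a short list of exceptional families --- namely, $\Pi_{F^{+}}$ is automorphically induced from a cuspidal representation of $\GL(2)$ over a quadratic extension of $F^{+}$, or from a Hecke character over a suitable quartic extension (in particular whenever $\Pi_{F^{+}}$ admits a non-trivial self-twist); or $L(s,\Pi_{F^{+}},\wedge^{2})$ has a pole at $s=1$, i.e.\ $\Pi_{F^{+}}$ is of symplectic type; or $\Pi_{F^{+}}\simeq\pi_{1}\boxtimes\pi_{2}$ is the isobaric functorial product of two cuspidal representations $\pi_{1},\pi_{2}$ of $\GL(2)/F^{+}$. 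One then checks that our $\Pi_{F^{+}}$ avoids each family.

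The automorphic induction cases (hence also any non-trivial self-twist, which forces automorphic induction from the fixed field of the twisting character) are excluded by Lemma~\ref{lemma:remainscuspidal}, which shows that $\Pi_{F^{+}}$ remains cuspidal after every solvable base change. The symplectic-type case is excluded by the shape of the self-duality: the isomorphism $\psi\simeq\psi^{\vee}\chi$ was realised, in the proof of Theorem~\ref{theorem:construct}, by a \emph{symmetric} pairing (using the irreducibility of $\wedge^{2}\psi$), so $\psi$, and hence $\Pi_{F^{+}}$, is essentially orthogonally self-dual; consequently $L(s,\Pi_{F^{+}},\wedge^{2})$ is holomorphic (and non-zero) at $s=1$.

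Finally, suppose $\Pi_{F^{+}}\simeq\pi_{1}\boxtimes\pi_{2}$. Since $\Pi_{F^{+}}$ is regular algebraic, so are $\pi_{1}$ and $\pi_{2}$, which are therefore attached to two-dimensional Galois representations $\sigma_{1},\sigma_{2}$; compatibility of the functorial product with the local correspondence identifies $\psi|_{G_{F^{+}}}$ with $\sigma_{1}\otimes\sigma_{2}$, so that $\wedge^{2}\psi|_{G_{F^{+}}}$ would be the reducible representation $(\mathrm{Sym}^{2}\sigma_{1}\otimes\det\sigma_{2})\oplus(\det\sigma_{1}\otimes\mathrm{Sym}^{2}\sigma_{2})$. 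This contradicts the irreducibility of $\wedge^{2}\psi|_{G_{F^{+}}}$, which one sees as follows. With $c$ the non-trivial element of $\mathrm{Gal}(K/\Q)$ and $X:=\mathrm{Sym}^{2}\rho\otimes\det\rho^{c}$, one has $\wedge^{2}\psi|_{G_{K}}\simeq X\oplus X^{c}$; because $\rhobar$ has image containing $\SL_{2}(\F_{p})$ and $F^{+}$ may be chosen linearly disjoint (over $\Q$) from the number field cut out by $\rhobar$, the hypothesis that $\mathrm{Proj}(\rhobar)$ does not extend to $G_{\Q}$ implies $X|_{G_{KF^{+}}}\not\simeq X^{c}|_{G_{KF^{+}}}$ --- otherwise $\mathrm{Proj}(\rho)|_{G_{KF^{+}}}$ would be $\mathrm{Gal}(KF^{+}/F^{+})$-stable, and a descent argument for projective representations of large image would extend $\mathrm{Proj}(\rhobar)$ first to $G_{F^{+}}$ and then to $G_{\Q}$. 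Hence $\wedge^{2}\psi|_{G_{F^{+}}}\simeq\mathrm{Ind}_{G_{KF^{+}}}^{G_{F^{+}}}(X|_{G_{KF^{+}}})$ is irreducible, and $\Pi_{F^{+}}\not\simeq\pi_{1}\boxtimes\pi_{2}$. Therefore $\wedge^{2}\Pi_{F^{+}}$ is cuspidal.

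The main obstacle is this last step: converting the factorisation $\Pi_{F^{+}}\simeq\pi_{1}\boxtimes\pi_{2}$ into a contradiction with hypothesis~(2) of Theorem~\ref{theorem:parallel2}. It relies on the Galois representations of the Hilbert modular factors $\pi_{i}$, the description of $\wedge^{2}\psi$ as an induced representation, and --- most delicately --- the descent for projective representations of large image along $KF^{+}/F^{+}$ and then $K/\Q$, which in turn uses the freedom in the choice of $F^{+}$ (linear disjointness from the splitting field of $\rhobar$).
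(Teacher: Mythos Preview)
Your overall strategy is the paper's: invoke the Asgari--Raghuram criterion and eliminate each obstruction. Two points deserve comment.

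First, your enumeration of the cases is incomplete. The paper's list (following Theorem~1.1 of~\cite{AR}) includes the possibility that $\Pi_{F^{+}}$ is the \emph{Asai transfer} of a dihedral cuspidal representation $\pi_E$ of $\GL(2)/E$ for a quadratic extension $E/F^{+}$. This is not automorphic induction --- the base change of $\mathrm{As}(\pi_E)$ to $E$ is $\pi_E \otimes \pi_E^{\sigma}$, not $\pi_E \boxplus \pi_E^{\sigma}$ --- and it does not obviously force a non-trivial self-twist of $\Pi_{F^{+}}$, so it is not covered by what you wrote. The repair is the same device you already use: if $\pi_E$ is dihedral it becomes non-cuspidal over some quadratic $H/E$, hence $\Pi_{F^{+}}$ becomes non-cuspidal over the solvable Galois closure of $H/F^{+}$, contradicting Lemma~\ref{lemma:remainscuspidal}.

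Second, your treatment of $\Pi_{F^{+}}\simeq\pi_1\boxtimes\pi_2$ differs from the paper's. You attach $2$-dimensional Galois representations $\sigma_i$ to the $\pi_i$ and read off that $\wedge^2(\sigma_1\otimes\sigma_2)$ is reducible. This is fine, but the sentence ``since $\Pi_{F^{+}}$ is regular algebraic, so are $\pi_1$ and $\pi_2$'' needs a word of justification: the factorisation is only determined up to twists $(\pi_1\chi,\pi_2\chi^{-1})$, and one must observe (via the infinity type) that a twist exists making both $\pi_i$ regular algebraic. The paper instead stays on the automorphic side: after disposing of the case where some $\pi_i$ is dihedral (again via Lemma~\ref{lemma:remainscuspidal}), it identifies the isobaric constituents of $\wedge^2\Pi_{F^{+}}$ as RAESDC for $\GL(3)/F^{+}$, attaches $3$-dimensional Galois representations to \emph{them} via~\cite{zeta}, and then contradicts the irreducibility of $\wedge^2\psi|_{G_{F^{+}}}$. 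Your route is more direct (it leans on Galois representations for Hilbert modular forms rather than for $\GL(3)$); the paper's route avoids the twist subtlety. Your explicit derivation of the irreducibility of $\wedge^2\psi|_{G_{F^{+}}}$ from hypothesis~(2) of Theorem~\ref{theorem:parallel2}, together with the freedom to choose $F^{+}$ linearly disjoint from the splitting field of $\rhobar$, spells out what the paper dismisses as ``easily seen'', and is welcome.
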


\begin{proof} 
If $\wedge^2 \Pi_{F^{+}}$ is not cuspidal, then, since it is isobaric, we may write
$$\wedge^2 \Pi_{F^{+}} \simeq \boxplus^{m}_{j=1} \beb_j, \quad m > 1,$$
where each $\beb_j$ is a cuspidal automorphic representation of
$\GL(n_j)/F^{+}$ with $\sum_{j} n_j = 6$ and $n_i \le n_j$ if $i \le j$. By
comparison with the infinity type $\wedge^2 \sigma_v$, the
forms $\beb_j$  will necessarily be algebraic.
Theorem~1.1 of~\cite{AR} gives gives a list of necessary (and sufficient)
conditions for $\wedge^2 \Pi_{F^{+}}$ to be non-cuspidal.
We show why none of these possibilities may occur.
\begin{enumerate}
\item $\Pi_{F^{+}} = \pi_1 \boxtimes \pi_2$, where $\pi_1$ and $\pi_2$
are cuspidal for $\GL(2)/F^{+}$. If either $\pi_1$ or $\pi_2$ does not remain cuspidal
over some quadratic extension of $F^{+}$, then  $\Pi_{F^+}$ does not remain cuspidal either,
contradicting Lemma~\ref{lemma:remainscuspidal}.  Hence, we may assume
that $\Sym^2 \pi_1$ and $\Sym^2 \pi_2$ are both cuspidal. 
It follows that
$$\wedge^2 \Pi_{F^{+}} = \beb_1 \boxtimes \beb_2 = \chi_2
\otimes  \Sym^2 \pi_1 \boxtimes \chi_1 \otimes  \Sym^2 \pi_2,$$
where $\chi_1$ and $\chi_2$ are characters. In particular,  the infinity type
of $\beb_1$ and $\beb_2$ must be
$$ I(z^{n} \overline{z}^{n+2m}) \oplus \sgn^{m+n+1} | \cdot|^{m+n} \quad
\text{and} \quad
I(z^{m} \overline{z}^{2n+m}) \oplus \sgn^{m+n+1} |\cdot|^{m+n}$$
respectively. Both these characters are algebraic and regular. Moreover,
 $\beb_1$ and $\beb_2$ are essentially self-dual (via their
identifications up to twist with symmetric squares), and $F^{+}$ is totally
real. It follows from the main result of~\cite{zeta}
that $\beb_1$ and $\beb_2$ may be associated to three dimensional
(essentially self-dual)  Galois representations of $G_{F^{+}}$. Yet
$\wedge^2 \Pi_{F^{+}}$ is associated to the Galois representation 
$\wedge^2 \psi|_{G_{F^{+}}}$, which is easily seen to be irreducible, and these
facts are incompatible.
\item $\Pi_{F^{+}}$ is the Asai transfer of a dihedral  cuspidal
automorphic representation $\pi_E$ of $\GL(2)/E$ for some quadratic extension
$E/F^{+}$. We deduce that the base change of $\pi_E$ to
some  CM extension $H/E$ of degree two is no longer cuspidal, and thus
that the base change of $\Pi_{F^{+}}$ to the (solvable) Galois
closure of $H$ over $F^{+}$
is also not cuspidal. This contradicts Lemma~\ref{lemma:remainscuspidal}.
\item $\Pi_{F^{+}}$ is of symplectic type. By the main theorem of~\cite{BC} (cf.~\cite{CC}), we may deduce
that the symplectic/orthogonal alternative for the automorphic form
determines and is determined by the Galois representation. Yet
the Galois  representation is of orthogonal type.
\item $\Pi_{F^{+}}$ is the automorphic induction from some quadratic 
extension $E/F^{+}$.
If $\Pi_{F^{+}}$ arises via automorphic induction, then it is isomorphic
to a self twist by a quadratic character. It follows that the base change
of $\Pi_{F^{+}}$ to $E$ is not cuspidal,
contradicting Lemma~\ref{lemma:remainscuspidal}.
\end{enumerate}
\end{proof}

On the level of Galois representations, if we let $F = F^{+}.K$ then
$$\wedge^2 \psi |_{F} = \Sym^2 (\rho)  \det(\rho^c) \oplus \Sym^2(\rho^c)  \det(\rho).$$
By multiplicity one for $\GL(6)$~\cite{J}, it follows that
$\wedge^2 \Pi_{F^{+}} \simeq \wedge^2 \Pi_{F^{+}} \otimes \eta$, where $\eta$ is the quadratic character
of $F/{F^{+}}$. In particular,  from Theorem~4.2 of~\cite{AC},
 $\wedge^2 \Pi_{F^{+}}$ is the automorphic
induction of an automorphic form for $\GL(3)/F$, which we shall denote by 
$\Sm(\pi_F)$. From the description of the infinity type $\wedge^2 \sigma_v$
of $\wedge^2 \Pi_{F^{+}}$ given above, we deduce that the infinity type
of $\Sm(\pi_F)$ at $v$ is one of four possibilities:
$$\begin{aligned}
z \mapsto & \ z^n \overline{z}^{n+2m} \oplus z^m \overline{z}^{2n+m}
\oplus z^{m+n} \overline{z}^{m+n}, \\
z \mapsto & \ z^n \overline{z}^{n+2m} \oplus z^{2n+m} \overline{z}^{m}
\oplus z^{m+n} \overline{z}^{m+n},  \\
z \mapsto & \ z^{n+2m} \overline{z}^{n} \oplus z^m \overline{z}^{2n+m}
\oplus z^{m+n} \overline{z}^{m+n},  \\
z \mapsto & \ z^{n+2m} \overline{z}^{n} \oplus z^{2n+m} \overline{z}^{m}
\oplus z^{m+n} \overline{z}^{m+n}. \end{aligned}$$
These correspond to the only four partitions of (the six characters occuring in)
$\wedge^2 \sigma_v |_{\C^{\times}}$ into  two sets of three characters which are permuted
by the involution $z \mapsto \overline{z}$. This calculation uses the fact that
$\{0,m,n,m+n\}$ are all distinct. 
The notation $\Sm(\pi_F)$  is
 meant to suggest the existence of an automorphic form $\pi_F$ for $\GL(2)/F$
associated to $\rho|_{G_F}$; if such a $\pi_F$ existed then $\Sym^2 \pi_F$ would be isomorphic
(up to twist) 
 to $\Sm(\pi_F)$. It is not  necessary for our arguments, however,
to establish the existence of such a $\pi_F$.
We see explicitly that the infinity type of $\Sm(\pi_F)$ is regular algebraic
and cohomological. 
Moreover, we deduce
  that  the infinity type
  is not preserved by the Cartan involution (which effectively replaces
  $z$ by $\overline{z}$ in this case --- once more noting that $\{0,m,n,m+n\}$ are
  all distinct by hypothesis), and thus
 $\Sm(\pi_F)$ cannot exist, by
  Borel--Wallach 
  (Theorem 6.7, VII, p.226~\cite{BW}). 
  As stated, this theorem applies only in the compact case.
  However,  the same proof (via vanishing
  of $(\mathfrak{g},K)$-cohomology)  applies more generally
  to the \emph{cuspidal} cohomology in the non-compact case, which
  also may be computed via
  $(\mathfrak{g},K)$-cohomology.
    \qed

 \section{Even Representations}
\label{section:even}

Let $E/\Q$ be a totally real field.
Recall that a representation $r: G_E \rightarrow \GL_n(\OL)$ is \emph{odd} if, for any complex conjugation $c \in G_E$, 
$\Tr(\rho(c)) = -1$, $0$, or $1$. We start by noting the following:

\begin{theorem}  Let $n$ be odd. Let $E$ be a totally real field, and let \label{theorem:evenodd}
$r: G_{E} \rightarrow  \GL_n(\OL)$ be a continuous irreducible Galois representation
unramified outside finitely many primes.
Let $\chi$ be a finite order character of $G_{E}$ that is unramified at all $v|p$.
 Suppose that $p > 2n$, and, furthermore, that
\begin{enumerate}
\item $r$ is self-dual up to twist: $r \simeq r^{\vee} \eps^{1 - n} \chi$.
\item $r$ is ordinary for all $v|p$ with distinct Hodge--Tate weights.
\item $\rbar$ has $2$-big image, in the sense of~\cite{BLGHT}, Definition~7.2.
\item The fixed field of $\ad(\rbar)$ does not contain
$E(\zeta_p)^{+}$.
\item $(\det \rbar)^2 = \eps^{n(1-n)}   \mod p$.
\end{enumerate}
Then there exits a totally real field $F^{+}/\Q$ such that
the restriction $r |_{F^{+}}$ is modular, i.e., associated to a RAESDC
form $\Pi_{F^{+}}$.
\end{theorem}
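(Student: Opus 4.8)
The plan is to deduce the theorem from the potential automorphy theorem already invoked above, Theorem~7.5 of~\cite{BLGHT}, in its ordinary form (using the ordinary modularity lifting theorems of~\cite{G}). All the work lies in checking that hypotheses (1)--(5), together with the parity of $n$, are exactly what that machine consumes. First I would record that $r$ is of RAESDC type over $E$ and of \emph{orthogonal} flavour. By (1) the space $L^{n}$ underlying $r$ carries a perfect pairing $\langle\cdot,\cdot\rangle$ with $\langle\sigma x,\sigma y\rangle=\eps^{1-n}\chi(\sigma)\langle x,y\rangle$; since $r$ is irreducible, Schur's lemma makes this pairing unique up to scalar, hence symmetric or alternating, and because $n$ is odd there is no nondegenerate alternating form on an $n$-dimensional space, so it is symmetric. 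Thus $r$ is essentially self-dual with similitude character $\eps^{1-n}\chi$ and of orthogonal type; this is the one place the oddness of $n$ is used, and it pins the ``sign'' to $+1$. Hypothesis (2) supplies simultaneously the regularity (distinct Hodge--Tate weights at all $v\mid p$) and the ordinarity required by the ordinary version of the theorem, and $p>2n$ is its numerical hypothesis.

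Next I would verify the residual inputs that govern the base change. Hypothesis (3) is the $2$-big image condition of~\cite{BLGHT}, Definition~7.2, verbatim. Hypothesis (4), that $\overline{E}^{\ker(\ad\rbar)}$ does not contain $E(\zeta_p)^{+}$, is the linear-disjointness input: it forces $\overline{E}^{\ker(\ad\rbar)}$ and $E(\zeta_p)$ to be linearly disjoint over $E$, so that when one base changes --- to the solvable extensions used to realize $\rbar$ geometrically, to an imaginary CM field containing $\zeta_p$ (forced because the Taylor--Wiles arguments for $\GL(n)$ run over CM fields), and finally to the totally real field $F^{+}$ produced below --- the residual image stays $2$-big and the Chebotarev and Taylor--Wiles inputs still apply. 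Hypothesis (5), $(\det\rbar)^{2}\equiv\eps^{n(1-n)}$ modulo $p$, is the determinant-matching condition: the $n$-dimensional piece $V$ cut out of the middle cohomology of a smooth member of the Dwork family of Calabi--Yau varieties used in~\cite{BLGHT} is pure of weight $n-1$ and orthogonally self-dual with similitude character $\eps^{1-n}$, so $(\det V)^{2}=\eps^{n(1-n)}$; thus (5) says precisely that $\det\rbar$ agrees with the reduction of $\det V$ up to its sign ambiguity and an everywhere-unramified quadratic twist, which is exactly what allows $\rbar$ --- after base change and a harmless twist --- to be identified with the mod $p$ representation attached to a point of that family.

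With the hypotheses in hand I would run the now-standard argument. A theorem of Moret--Bailly on points of (twists of) the Dwork family over totally real fields with prescribed local behaviour produces a totally real field $F^{+}\supseteq E$, linearly disjoint over $E$ from any prescribed finite extension, an auxiliary prime $\ell$, and an ordinary point $t$ over $F^{+}$ such that $\overline{V}_{t,p}\cong\rbar|_{G_{F^{+}}}$ while $V_{t,\ell}$ is irreducibly induced from an algebraic Hecke character of a cyclic CM extension, hence automorphic by~\cite{AC}. Automorphy then transfers across the compatible system $\{V_{t,\lambda}\}$ to $V_{t,p}$; and since its residual representation $\rbar|_{G_{F^{+}}}$ has $2$-big image while $r|_{G_{F^{+}}}$ is an ordinary lift of it with distinct Hodge--Tate weights, the ordinary modularity lifting theorem of~\cite{G} --- using hypothesis (2) --- upgrades automorphy of $V_{t,p}$ to automorphy of $r|_{G_{F^{+}}}$. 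This yields the RAESDC representation $\Pi_{F^{+}}$ of $\GL(n)/F^{+}$ attached to $r|_{G_{F^{+}}}$, which is the assertion; being attached to an orthogonally self-dual Galois representation, $\Pi_{F^{+}}$ is of orthogonal type, consistently with~(1).

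I expect the main obstacle to be bookkeeping rather than anything conceptual. One must keep careful track of the polarization --- that it is orthogonal with similitude character $\eps^{1-n}\chi$, which is where the parity of $n$ enters --- and, above all, check that the $2$-big image property and the disjointness of $\overline{E}^{\ker(\ad\rbar)}$ from $E(\zeta_p)^{+}$ are preserved under every base change invoked: the solvable ones, the passage to a CM field containing $\zeta_p$, and the totally real extension $F^{+}$ itself. Hypotheses (3), (4) and (5) are designed precisely so that these properties survive, and once they are verified Theorem~7.5 of~\cite{BLGHT} (in its ordinary form) applies directly and gives the theorem.
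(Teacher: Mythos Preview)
Your proposal is correct and follows exactly the paper's route: check that the self-duality pairing is symmetric (forced by $n$ odd), match the sign, and invoke Theorem~7.5 of~\cite{BLGHT}. Two small points are worth flagging. First, the paper also verifies explicitly, by taking determinants of $r\simeq r^{\vee}\eps^{1-n}\chi$ and using that $n$ is odd, that the similitude character satisfies $(\eps^{1-n}\chi)(c)=+1$ for every complex conjugation $c$; this is the other half of the sign-matching hypothesis in Theorem~7.5 of~\cite{BLGHT}, and you should state it rather than fold it into ``pins the sign to $+1$.'' Second, your sketch of what happens \emph{inside} Theorem~7.5 --- Dwork family in dimension $n-1$, then ordinary lifting for $\GL(n)$ via~\cite{G} --- actually describes the alternative approach outlined in the Remark immediately following the theorem, not the proof of Theorem~7.5 itself, which proceeds via $\GL(2n)$ over a totally real field (this is precisely why the bound is $p>2n$ rather than $p>n$). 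Since you are citing Theorem~7.5 as a black box this does not affect validity, but the attribution should be corrected.
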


\begin{proof} Taking determinants of both sides of the relation
$r \simeq r^{\vee} \eps^{1-n} \chi$, we deduce that $\eps^{1-n} \chi(c) = + 1$
for any complex conjugation $c$.
Since $r$ is irreducible and self-dual up to twist, we may deduce that
$r$ preserves a non-degenerate pairing $\langle x,y \rangle$. Since
$n$ is odd, moreover, this pairing must be symmetric.
Hence, we
may apply    Theorem~7.5 of~\cite{BLGHT} (with $\eps = +1$) to deduce the result.
\end{proof}

\Remark It should be possible to prove this theorem for $p > n$ as follows.
Suppose that $r$    corresponds to a RACSDC automorphic
$\Pi_F$ for $\GL(n)/F$ for some CM field $F$. By
Lemma~4.3.3 of~\cite{CHT}, we may deduce that $\Pi_F$ descends to a RAESDC
form $\Pi_{F^{+}}$ for $\GL(n)/F^{+}$.
Hence,
in light of modularity lifting theorems of Geraghty~\cite{G} (in particular, Theorem~5.3.2), it suffices to prove that
$\rbar|_{G_F}$ is modular for some CM field $F$
which is  sufficiently disjoint from $E(\zeta_p)^{+}$.
One approach to proving modularity theorems of this type arises in
the work of Barnet-Lamb (\cite{TBL2}, Proposition 7).
As written, this theorem requires some extra assumptions, in particular, that $\rbar$ is crystalline
and that the residue field of $\OL$ is $\F_p$ (the fact that $n$ is odd guarantees that all sign
conditions are satisfied).
However, combining this approach with recent advances
(particularly, generalizing the results about the monodromy of the Dwork family proved 
in~(\cite{BLGHT}~\S4, \S5) to the more general setting of~\cite{TBL2}), these conditions
can presumably be removed in the ordinary case.   The reason that one obtains
a better bound on $p$ is that this method  requires modularity lifting theorems for
$\GL(n)$ (over a CM field) rather than modularity theorems for
$\GL(2n)$  (over a totally real field) as in the proof
of Theorem~7.5 of~\cite{BLGHT}.

\medskip 

\noindent \bf Proof of Theorem~\ref{theorem:even}.\rm \ 
Suppose that $\rho$ satisfies the
conditions of Theorem~\ref{theorem:even}. I claim that
$r = \Sym^2(\rho) \otimes (\eps^{-1} \det(\rho)^{-1}) = \ad^0(\rho) \otimes \eps^{-1}$ satisfies the conditions
of Theorem~\ref{theorem:evenodd}. In particular:
\begin{enumerate}
\item $r$ is self-dual up to twist, because $\ad^0(\rho)$ is self-dual.
\item $r$ is ordinary, because $\rho$ is ordinary (by assumption).
\item $\rbar$ has $2$-big image. By corollary~2.5.4 of~\cite{CHT}, $\rbar$ has big-image.
In fact,  the same proof applies to show that $\rbar$ has $2$-big image. The difference
in the definition of $2$-big image and big image is the requirement that the element $h \in H$
with eigenvalue $\alpha$ 
arising in the definition of bigness (Definition~2.5.1 of~\cite{CHT}, see also
section~\ref{section:appendix} of this paper)  
has the property that if $\beta$ is any other generalized eigenvalue of $h$, then $\alpha^2 \ne \beta^2$.
In the proof of Lemma~2.5.2 of~\cite{CHT}, the element $h$ is taken to be a generator $t$ of the $\F_p$-split
torus of $\SL_2(\F_p)$. The image of $t$  acting via the adjoint representation of $\SL_2(\F_p)$  is
$$\mathrm{diag}(\delta^2,1,\delta^{-2}),$$
where $\delta$ is a generator of $\F^{\times}_p$. If $\alpha$ and $\beta$ are two distinct eigenvalues
of this matrix, then $\alpha^2 = \beta^2$ implies that $\delta^8 = 1$. Yet $\delta$ generates $\F^{\times}_p$ and
thus has order $p - 1$, which does not divide $8$ if $p \ge 7$.

It should be noted that Lemma~2.5.2 of~\cite{CHT} is not exactly
correct as stated --- the requirement on $l$ in \it{ibid}.\ \rm  should be $l > 2n+1$ rather
than $l > 2n-1$. The issue is the appeal to~\cite{CPS}; the group
$H^1(U,\mathrm{Symm}^{2i})^B$ vanishes for $l > 2n + 1$ rather than $l > n+1$.
Of relevance to this paper is that $H^1(\SL_2(\F_7),\Sym^4 \F^2) \ne 0$; this is why
we assume that $p > 7$. In fact, the main theorem of our paper still
holds when $p = 7$ under the stronger assumption
that  $\SL_2(\F_q) \subseteq \im(\rhobar)$ for some $q > p$.
\item The fixed field of $\ad(\rbar)$ does not contain $E(\zeta_p)^{+}$.
We are assuming the image of $\rhobar$ contains $\SL_2(\F_p)$, and in particular that
$$\SL_2(\F_p) \subset \im(\rhobar) \subset \GL_2(\F).$$
It follows that the image of 
$\ad^0(\rhobar)$ contains $\PSL_2(\F_p)$ and is contained in $\PGL_2(\F)$. Consequently
(by the classification of subgroups of $\PGL_2(\F)$), the
image is isomorphic to either $\PSL_2(k)$ or $\PGL_2(k)$ for some
 $\F_p \subset k \subset \F$. Since $\#k \ge 7$,  $\PSL_2(k)$  is a simple group.
Since $\rbar$ is a twist of $\ad^0(\rhobar)$, the image of $\ad(\rbar)$ is also isomorphic to
$\PSL_2(k)$ or $\PGL_2(k)$.  In particular, the maximal solvable extension $E'$ of $E$ contained
inside $E(\ad(\rbar))$ has degree at most $2$ over $E$.
Hence
$$E(\ad(\rbar)) \cap E(\zeta_p)^{+} = E' \cap E(\zeta_p)^{+} \subset E'.$$
Yet $[E':E] \le 2$, and by assumption, $[E(\zeta_p)^{+}:E] > 2$. Thus $E(\ad(\rbar))$ does not
contain $E(\zeta_p)^{+}$.
\item Since $\det(\ad^0(\rhobar))$ is trivial, $\det(\rbar)^2 = \det(\eps^{-1})^2 = \eps^{-6} = \eps^{3(1-3)}$.
\end{enumerate}
We deduce that $r$ is modular over some totally real field $F^{+}$.
Using the same normalizations discussed in section~\ref{section:ll}, we conclude that
there exists an automorphic form $\Pi_{F^{+}}$ for $\GL(3)/F^{+}$ such that,
for all finite $v$ in $F^{+}$ not dividing $p$, 
$$ \iota \circ (r |_{W_{F^{+}_v}})^{\mathrm{ss}} = \rec \left(\Pi^{\vee}_{F^{+},v} | \cdot |^{-1} \right)^{\mathrm{ss}}.$$
Since $\rec(|\cdot|^{-1}) = \eps^{-1}$, we may twist both sides to deduce that
$$ \iota \circ (\ad^0(\rho) |_{W_{F^{+}_v}})^{\mathrm{ss}} = \rec \left(\Pi^{\vee}_{F^{+},v} \right)^{\mathrm{ss}}.$$
By multiplicity one for $\GL(3)$ (\cite{J}), 
we immediately deduce (by considering the Galois representation
$\ad^0(\rho)$) that $\Pi^{\vee}_{F^+} \simeq \Pi_{F^+}$ and that $\Pi_{F^+}$ has trivial central character.
If follows from Theorem~A and Corollary~B of~\cite{Appendix} that $\Pi_{F^+}$ is
 the symmetric square of a RAESDC automorphic form $\pi_{F^+}$
for $\GL(2)/F^+$, that is, a Hilbert modular form.
Such automorphic forms are known to admit a $p$-adic Galois representation,
which must be equal to $\rho|_{F^+}$ up to twist. Yet the Galois representations associated to Hilbert modular
forms are odd, and thus $\rho$ is odd.
\qed

\medskip

Instead of appealing to functorial properties of the symmetric square lift,
one may appeal to the following theorem:

\begin{theorem}[Taylor,~\cite{TP}] Let $F^{+}$ be a totally real field, let 
\label{theorem:totallyodd2}
$\Pi_{F^+}$ be a RAESDC automorphic form for $\GL(n)/F^{+}$ for odd $n$, and
let $r$ be a $p$-adic Galois representation associated to $\Pi_{F^+}$.
Assume that
$r$ is irreducible. \label{theorem:TayTay}
Then $r$ is odd.
\end{theorem}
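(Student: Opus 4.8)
\emph{Reductions.} The plan is to realize $r$, after a harmless base change, in the cohomology of a unitary-group Shimura variety, and then to read off $\Tr(r(c))$ from the induced action of complex conjugation, which is anti-holomorphic and hence Hodge-theoretically rigid. First, oddness is insensitive to solvable totally real base change: if $F'^{+}/F^{+}$ is totally real of odd degree, then any complex conjugation $c\in G_{F^{+}}$ is $G_{F^{+}}$-conjugate to an element $c'$ of the subgroup $G_{F'^{+}}$, and $\Tr(r(c))=\Tr(r(c'))$, so we may replace $F^{+}$ by $F'^{+}$ and $\Pi_{F^{+}}$ by its base change. Since $n$ is odd, the essential self-duality of $\Pi_{F^{+}}$ is forced to be of orthogonal type, and a determinant computation exactly like the one in Lemma~\ref{lemma:bigrefer} shows that the associated similitude character is $+1$ on complex conjugations, so all the sign conditions needed below hold. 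After such a base change and the choice of a CM quadratic extension $F/F^{+}$, the base change $\Pi_F$ of $\Pi_{F^{+}}$ to $F$ is, up to a harmless character twist, RACSDC, and --- using base change and descent for unitary groups in the style of~\cite{AC} and Labesse, together with the hypothesis that $r$ is irreducible, which we preserve along the way and which keeps everything cuspidal after further solvable base change --- $\Pi_F$ descends to a cuspidal automorphic representation $\sigma$ of a unitary group $U/F^{+}$ attached to $F/F^{+}$ which is an inner form, compact at every archimedean place except one $v_0$, where it has signature $(1,n-1)$.

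\emph{Geometric realization.} Let $\mathrm{Sh}$ be the associated unitary Shimura variety, of dimension $d=n-1$ over its reflex field (a CM field, which we may take to be $F$), with $\ell$-adic local system $\mathcal L_{\lambda}$ matching the infinitesimal character of $\sigma_{\infty}$ --- equivalently, the Hodge--Tate weights of $r$, which are distinct by regularity. Since $\Pi_F$ is cuspidal, regular and tempered, the vanishing of $(\mathfrak g,K)$-cohomology away from the middle degree --- the Borel--Wallach input already used in~\S\ref{section:ramakrishnan} (cf.~\cite{BW}) --- concentrates its contribution in the middle interior cohomology $H^{d}_{!}(\mathrm{Sh}_{\overline F},\mathcal L_{\lambda})$; and there, by multiplicity one for $\GL(n)$ (\cite{J}), the one-dimensionality of the relevant $(\mathfrak g,K)$-cohomology of the discrete series at $v_0$, and the description of the Galois representations carried by these Shimura varieties (Kottwitz, Harris--Taylor, Shin; cf.~\cite{HT}), the $\sigma$-isotypic part is, as a $G_F$-module, a single copy of $r|_{G_F}$ up to a Tate twist.

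\emph{Complex conjugation and the Hodge argument.} Because $U$ is defined over the totally real field $F^{+}$, the complex uniformization of $\mathrm{Sh}(\C)$ carries a canonical \emph{anti-holomorphic} involution $\iota$ (complex conjugation on the Hermitian symmetric domain at $v_0$, the identity at the compact places), and one checks that, under the Betti--\'etale comparison, $\iota^{\ast}$ on the $\sigma$-part of $H^{d}_{!}(\mathrm{Sh}(\C),\mathcal L_{\lambda})$ computes the action of a complex conjugation $c\in G_{F^{+}}$ on $r$; here one uses that $\Pi_{F^{+}}$, hence $r$, is defined over $F^{+}$, so $r|_{G_F}\simeq(r|_{G_F})^{c}$ and the $\sigma$-eigenspace is $\iota$-stable. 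Now an anti-holomorphic self-map carries the Hodge component $H^{p,q}$ to $H^{q,p}$, so $\iota^{\ast}$ has trace $0$ on every off-diagonal pair $H^{p,q}\oplus H^{q,p}$, and $\Tr(\iota^{\ast})$ reduces to the trace of $\iota^{\ast}$ on the central Hodge component $H^{w/2,w/2}$ (the weight $w$ of the $\sigma$-part of $H^{d}_{!}$ being even, since $r$ is orthogonal essentially self-dual with $n$ odd). As the Hodge--Tate weights of $r$ are distinct and symmetric about their integral centre, this central component of the single copy of $r|_{G_F}$ is one-dimensional, so $\iota^{\ast}$ acts on it by $\pm 1$; untwisting the Tate twist (a further sign) gives $\Tr(r(c))=\pm 1$, i.e., $r$ is odd. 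Equivalently, one may argue via the Lefschetz fixed-point formula: $\sum_{i}(-1)^{i}\Tr(\iota^{\ast}\mid H^{i}_{!})$ equals the Euler characteristic of the real locus $\mathrm{Sh}(\C)^{\iota}$ --- a finite union of locally symmetric spaces for real forms of $U$ in the sense of Rohlfs --- evaluated by Harder's Gauss--Bonnet formula, and restriction to the $\sigma$-eigenspace recovers the same $\pm 1$.

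\emph{Main obstacle.} Two steps carry the real weight. First, the descent of $\Pi_{F^{+}}$ to a unitary group of the prescribed archimedean signature is not automatic; it is what forces the base-change maneuvers and an appeal to the full strength of the stabilized (twisted) trace formula. Second, and more delicate, is the verification that the \emph{topological} involution $\iota$ of $\mathrm{Sh}(\C)$ really computes the action of the \emph{arithmetic} complex conjugation $c\in G_{F^{+}}$ on $r$: since the reflex field is CM, $c$ does not act on $\mathrm{Sh}$ as a scheme over it, so one must match the two involutions through the canonical model and rational structure of the Shimura variety --- keeping track of the intervening multiplicities and twists --- which is the crux of the argument.
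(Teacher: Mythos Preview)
The paper does not give its own proof of this statement: it is quoted as a result of Taylor~\cite{TP} and used as a black box (indeed, the surrounding text offers it only as an \emph{alternative} to the functoriality argument via~\cite{Appendix} that actually completes the proof of Theorem~\ref{theorem:even}). So there is nothing in the paper to compare your sketch against.

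That said, your outline is broadly faithful to the strategy Taylor pursues in~\cite{TP}: descend $\Pi_{F^{+}}$ (after base change) to a unitary group whose Shimura variety has a model over a subfield on which complex conjugation acts, realize $r$ in middle-degree cohomology, and read off $\Tr(r(c))$ from the anti-holomorphic involution via the Hodge decomposition. You also correctly isolate the genuine crux --- identifying the arithmetic $c$ with the geometric involution of the Shimura variety when the reflex field is CM --- which is exactly where the work in~\cite{TP} lies. Two small comments: the ``odd degree'' hypothesis in your first reduction is irrelevant (any totally real extension suffices, since each complex conjugation of $F^{+}$ is the restriction of one of $F'^{+}$), and the appeal to Lemma~\ref{lemma:bigrefer} is only an analogy --- that lemma concerns $\rho\otimes\rho^{c}$ rather than an RAESDC $\Pi$; the relevant sign here comes directly from $n$ being odd and the essential self-duality. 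As a sketch this is fine, but it is not a proof: the two obstacles you flag at the end are precisely the content of Taylor's paper, not details to be filled in.
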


\Remark By Theorem~\ref{theorem:totallyodd2},  one may deduce
immediately from the modularity of $r$ that $\Sym^2(\rho)$ and hence $\rho$ are odd.

\medskip

\Remark If  the analog of Theorem~\ref{theorem:TayTay} was known for \emph{even} $n$,
this would also lead to a different proof of Theorem~\ref{theorem:parallel2}. Namely,
the Galois representation  $\psi = \rho \otimes \rho^c$ is shown to be associated
to a RAEDSC form  $\Pi_{F^{+}}$ for $\GL(4)/F^{+}$,
and yet $\mathrm{Tr}(\psi(c)) = \pm 2$.

\medskip

\Remark Although the Fontaine--Mazur conjecture predicts
that Theorem~\ref{theorem:evenodd} should continue hold when $n$ is even, the
argument above cannot (directly) be made to work. For $n$ even, the representations
$\rbar$ will \emph{not}, in general, be potentially modular (in the sense we are using)
over any CM field $F$, because the Bella\"{\i}che--Chenevier sign of the
Galois representation (see~\cite{BC}) may be $-1$.
Indeed, when $n = 2$, all representations are self-dual up to twist, and so an even
representation $\rhobar: G_E \rightarrow \GL_2(\F)$ is \emph{never} potentially
modular over a CM extension $F$  in the sense we are using.
(On the other hand, both conjecturally and experimentally (see~\cite{Fig}),
$\rhobar$  \emph{is} modular for $\GL(2)/F$ if we omit the
self-dual requirement.)

\medskip

\Remark
For $n = 4$, suppose that $E$ be a totally real field, and $\rho: G_{E} \rightarrow  \GSp_4(\OL)$  
is a continuous irreducible Galois representation
unramified outside finitely many primes. Assume, otherwise, that $r$ satisfies all the
conditions of Theorem~\ref{theorem:evenodd}. 
Since $\rho$ is symplectic, $\wedge^2 \rho$ has a one
dimensional summand.   Let $r$ be the complementary summand. 
It is a simple exercise to see that $r$ is self-dual up to twist by a character
that is either totally odd or totally even, and that $r$ is ordinary
with distinct Hodge--Tate weights. 
If $\rhobar$
has image containing $\GSp_4(\F)$, then the image of $\rbar$ in $\GL_5(\F)$ is
presumably large (this is an unpleasant calculation that the author has no interest
in attempting). If so, then one may apply Theorem~\ref{theorem:evenodd} to deduce
that $r$ is modular, and deduce from Theorem~\ref{theorem:totallyodd2} that $r$ is odd.
Consequently  $\rhobar$ cannot be totally even, that is,
$\rhobar(c)$ is not a scalar for any complex conjugation $c \in G_E$.
This result seems to be about the natural limit for such arguments --- there does not
seem to be a way to deduce anything about totally even representations with
image in $\GSp_6(\OL)$, for example.

\section{Compatible Families}

Let us recall from~\cite{TaylorFM} the notion of a weakly compatible family of Galois representations.
(In this section only, $\OL$ will be denote a global ring of integers, not a local one.)

\begin{df} A weakly compatible family  $R = (L,\{\rho_{\lambda}\},P_{\ell}(T),S,\{m,n\})$ of two dimensional Galois representations over $\Q$  consists of:
\begin{enumerate}
\item A number field $L$ with ring of integers $\OL$,
\item A finite set of rational primes $S$,
\item For each prime $\ell \not\in S$, a monic polynomial $P_{\ell}(T)$ of degree $2$ with coefficients in $L$.
\item For each prime $\lambda$ of $\OL$ with residue characteristic $p$,
$$\rho_{\lambda}: \Gal(\Qbar/\Q) \rightarrow \GL_2(\OL_{\lambda})$$
is a continuous representations such that, if $p \not\in S$, then
$\rho_{\lambda} | D_{p}$ is crystalline, and if $\ell  \not\in S \cup \{ p\}$ then
$\rho_{\lambda}$ is unramified at $\ell$ and
$\rho_{\lambda}(\Frob_{\ell})$ has characteristic polynomial $P_{\ell}(T)$, 
\item $m$ and $n$ are integers such that for all primes
$\lambda$ of $\OL_{F}$ above $p$, the representation $\rho_{\lambda}|D_p$
is Hodge--Tate with Hodge--Tate weights $m$ and $n$.
\end{enumerate}
\end{df}

Say that $R$ is irreducible if one (respectively, any) $\rho_{\lambda}$ is irreducible.
We prove the following result (contrast the result of Kisin~\cite{Kisin}, Corollary~(0.5)).

\begin{theorem} Let $R$ be an irreducible weakly compatible family of
two dimensional Galois representations of $\Q$ with
distinct Hodge--Tate weights.
Then, up to twist,
$R$ arises from a rank $2$ Grothendieck motive $M(f)$ attached to a classical
modular form $f$ of weight $\ge 2$.
\end{theorem}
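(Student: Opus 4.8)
The conclusion allows us to twist, so I would begin by replacing $R$ by a twist whose Hodge--Tate weights are $\{0,k-1\}$ for an integer $k\ge 2$ (possible precisely because the weights are distinct), and by enlarging $S$ to contain $2$, $3$, and every prime $\le 7$. The argument then divides according to whether $R$ is dihedral.

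If $R$ is dihedral, every $\rho_{\lambda}$ is induced from a character $\theta_{\lambda}$ of $G_{K}$ for a fixed quadratic field $K/\Q$. Each $\theta_{\lambda}$ is crystalline above its residue characteristic (it is a subrepresentation of a crystalline representation) and unramified outside a finite set, hence is the $\ell$-adic avatar of an algebraic Hecke character $\psi$ of $K$. The field $K$ must be imaginary: for a real quadratic $K$ the unit regulator forces the two archimedean exponents of an algebraic Hecke character to agree, which would make the Hodge--Tate weights of $\Ind_{K}^{\Q}\psi$ equal. So $K$ is imaginary quadratic, $\psi$ has infinity type $z\mapsto z^{a}\overline{z}^{b}$ with $a\ne b$, and after one further twist of $R$ by a power of $\eps$ we may take $b=0$, $a=k-1$; then $\Ind_{K}^{\Q}\psi$ is the Galois representation of a CM newform $f$ of weight $k$, and $R$ (up to the twists made) is the compatible family of $\ell$-adic realisations of $M(f)$.

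Now suppose $R$ is not dihedral. Then, for all but finitely many primes $\ell$ and every $\lambda\mid\ell$, the image of $\rhobar_{\lambda}$ contains $\SL_{2}(\F_{\ell})$; this is the standard large-image statement for a non-dihedral regular compatible family, the uniform bound on ramification controlling the residual images. I would next choose a prime $p>7$, $p\notin S$, for which $\rhobar_{\lambda}$ ($\lambda\mid p$) has large image in this sense \emph{and} $\rho_{\lambda}|D_{p}$ is ordinary. For such $p$ the representation $\rho_{\lambda}$ satisfies all the hypotheses of the Corollary to Theorem~\ref{theorem:even}, so $\rho_{\lambda}$ is modular: $\rho_{\lambda}\simeq\rho_{f,\lambda}$ for a classical newform $f$ of weight $k$ and level prime to $p$, where $\rho_{f,\lambda}$ is the Galois representation attached to $f$ by Eichler--Shimura and Deligne. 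Comparing characteristic polynomials of Frobenius by Chebotarev then forces $P_{\ell}(T)$ to equal the Hecke polynomial of $f$ at $\ell$ for \emph{every} $\ell\notin S$; hence for each prime $\lambda'$ of $\OL$ the representations $\rho_{\lambda'}$ and $\rho_{f,\lambda'}$ are irreducible (so semisimple) with equal traces on a set of Frobenii of density one, and therefore isomorphic by Brauer--Nesbitt. This displays $R$, up to the twists fixed above, as the family of $\ell$-adic realisations of the rank-$2$ motive $M(f)$.

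The main obstacle is the choice of $p$ in the previous paragraph: one must produce a good prime of large residual image at which the family is \emph{ordinary}. If $R$ is ordinary at no good prime then at each such $p$ the crystalline representation $\rho_{\lambda}|D_{p}$ is non-ordinary with Hodge--Tate weights in the Fontaine--Laffaille range, so $\rhobar_{\lambda}|D_{p}$ is irreducible, and one instead invokes Kisin's modularity theorem in the non-ordinary case (\cite{Kisin}, Corollary~(0.5)), which applies once $\rhobar_{\lambda}$ is known modular, i.e., by Serre's conjecture (\cite{Khare}), once $\rhobar_{\lambda}$ is odd. The genuinely delicate case is that $R$ be even: then Theorem~\ref{theorem:even} already forbids ordinariness at any good prime of large residual image, so neither the Corollary above nor Kisin's input is directly available, and one must rule out such an even $R$ outright --- equivalently, prove the nonexistence of irreducible even crystalline compatible families with distinct Hodge--Tate weights. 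It would suffice to exhibit a single prime at which an even such family is ordinary (after which Theorem~\ref{theorem:even} delivers the contradiction), and one expects, following Fontaine--Mazur and Serre, that every such family is ordinary at a positive density of primes; making this precise at one prime is the crux, and is not formal.
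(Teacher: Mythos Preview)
You have correctly isolated the obstruction and honestly flagged it: in the even, non-dihedral case you cannot guarantee an ordinary prime, and your fallback via Kisin's modularity lifting is circular because it requires residual modularity, hence (via Serre) oddness --- precisely what is in doubt. The paper does not attempt to produce an ordinary prime.

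The missing idea is to run the adjoint trick of Theorem~\ref{theorem:even} at a \emph{non-ordinary} prime. Choose $\lambda$ with $\OL_{\lambda}=\Q_p$, $p\notin S$, $p$ large relative to $k$, and such that the image of $\rhobar_{\lambda}$ contains $\SL_2(\F_p)$; infinitely many such $\lambda$ exist once the dihedral and exceptional-image cases are set aside. If $\rho_{\lambda}|_{D_p}$ happens to be ordinary, Theorem~\ref{theorem:even} already gives the contradiction. If not, then $\rho_{\lambda}|_{D_p}$ is crystalline in the Fontaine--Laffaille range with $\rhobar_{\lambda}|_{D_p}\simeq\Ind(\omega_2^k)$, and one again forms the three-dimensional $r$ (a twist of $\ad^0(\rho_{\lambda})$) exactly as in the proof of Theorem~\ref{theorem:even}. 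The point is that the potential automorphy machinery for essentially self-dual representations of \emph{odd} dimension carries no sign obstruction coming from $\rho_{\lambda}$; one need only substitute the crystalline potential automorphy theorem (Theorem~7.6 of~\cite{BLGHT}) for the ordinary one (Theorem~7.5) invoked in Theorem~\ref{theorem:evenodd}. Theorem~7.6 requires the coefficients to be $\Q_p$ --- hence the restriction to $\lambda$ of degree one --- and requires $\rbar$ to have $2k$-big image, available for $p$ sufficiently large by Lemma~7.4 of~\cite{BLGHT}. With $r$ potentially automorphic over a totally real field, the symmetric-square descent (or Theorem~\ref{theorem:TayTay}) forces $\rho_{\lambda}$ to be odd, contradicting evenness. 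So the crux you identified is resolved not by finding an ordinary prime for the two-dimensional $\rho_{\lambda}$, but by working with the three-dimensional $r$, where a non-ordinary potential automorphy input is available.

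A minor structural remark: in the odd case the paper does not hunt for an ordinary prime at all but simply cites Kisin's Corollary~0.5, which handles odd irreducible weakly compatible families directly.
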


\begin{proof} 

If $\{\rho_{\lambda}\}$ is odd (for any $\lambda$), this is a consequence of~\cite{Kisin}, Corollary~(0.5).
Hence, we may assume that
$k = n - m > 0$ and that $\rho_{\lambda}$ is even for all $\lambda$. Without loss of generality, we may
assume that $m = 0$. If the projective image of $\rhobar_{\lambda}$ is either $\{A_4,S_4,A_5\}$ for
infinitely many $\lambda$, then the projective image of $\rho_{\lambda}$ is also finite and we are in case $2$ above.
If the projective image of $\rhobar_{\lambda}$ is dihedral for infinitely many $\lambda$, then
$\rho_{\lambda}$  is induced from a one dimensional character of a quadratic extension of $\Q$. In the latter case, the modularity of $\rho_{\lambda}$ is
 an easy consequence of class field theory (see~\cite{FM}).
 Hence, we may assume that there exist infinitely many primes $\lambda \in \OL$ such that
 $\OL_{\lambda} = \Q_{p}$, and such that
 the image of $\rhobar_{\lambda}$ contains $\SL_2(\F_p)$. Consider a sufficiently large such prime 
 $\lambda$.
 If  $\rho_{\lambda}|D_{p}$ is ordinary, 
 then Theorem~\ref{theorem:even} implies that $\rho_{\lambda}$ is odd, a contradiction. 
 Otherwise, since we may assume that $p \notin S$, the representation $\rho_{\lambda}|D_{p}$
 is crystalline at $p$, and (assuming that $p$ is sufficiently large with respect to $k$) that
 $\rhobar_{\lambda} |D_{p} \simeq \Ind(\omega^k_2)$ for the fundamental tame character $\omega_2$
of  level  $2$. Consider the representation $r = \ad^0(\rhobar_{\lambda}) \otimes \eps$, where
$\eps$ is the cyclotomic character. The
proof of Theorem~\ref{theorem:evenodd} can be modified to show that for $p$ sufficiently large, $r$ is modular and hence $\rhobar_{\lambda}$ is odd, completing the proof of the theorem.  The key adjustment required to prove the potential modularity of
$r$  is to replace the appeal  to Theorem~7.5 of~\cite{BLGHT} with
Theorem~7.6 of \emph{ibid}, noting that the coefficients of the representation $\rhobar_{\lambda}$
are $\Q_{p}$, and that we may take $\lambda$ sufficiently large so as to deduce from
Lemma~7.4 of~\cite{BLGHT} that $\rbar$ has $2k$-big image.  
\end{proof}

\section{Complements}

One idea of this paper is to use potential modularity and functoriality to rule
out the existence of Galois representations whose infinity type has the same infinitesimal
character as a non-unitary finite dimensional representation. This method, however,
cannot be  applied in all such situations.
Consider a 
representation $\rho: G_{\Q} \rightarrow  \GL_3(E)$
 with three distinct Hodge--Tate weights that are \emph{not} in arithmetic
progression.
The $8$ dimensional irreducible
subrepresentation $\psi$ of $\rho \otimes \rho^\vee$, which one might hope
to prove is
automorphic for $\GL(8)/F^{+}$, does not have distinct Hodge--Tate weights.
Moreover, even if one knew that $\psi$ arose from some automorphic form
$\Pi_{F^{+}}$, functoriality is not sufficiently developed to reconstruct the 
 $\pi_{F^{+}}$ (associated
to $\rho$) from $\Pi_{F^{+}}$. 
Another natural question that falls outside the scope of our methods is the following.
\begin{question} Let $K$ be an imaginary quadratic field, and let $E/K$ be an elliptic curve. Does
there exist  a totally real field $F^{+}$ such that $E$ is potentially modular over $F:=F^{+}.K$? That is,
 does there exist an automorphic representation $\pi$ for $\GL(2)/F$ such that $L(\pi_F,s) = L(E/F,s)$?
\end{question}

\section{$\psibar$ has big image}
\label{section:appendix}

In this final, technical section, we verify that the residual 
representation $\psibar$ occuring in Lemma~\ref{lemma:bigrefer}
has $2$-big image.
 The definition of $2$-big image depends
on the residue field $\F$. However, in all modularity lifting theorems it is
harmless to extend scalars. (In particular, in Theorem~7.5 of~\cite{BLGHT}, one
is free to replace $\OL = \OL_L$ by $\OL_M$ for any finite extension $M/L$.)

\begin{lemma} If $\F_{p^2} \subset \F$, the residual representation $\psibar$ of section~\ref{section:ramakrishnan}
has $2$-big image
in the sense of~\cite{BLGHT}, Definition~7.2. \label{lemma:bigimage}
\end{lemma}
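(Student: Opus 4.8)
The plan is to follow the strategy of Lemma~2.5.2 and Corollary~2.5.4 of~\cite{CHT}, exploiting the tensor structure of $\psibar$. Recall that $\psibar|_{G_K} \simeq \rhobar \otimes \rhobar^c$ on $\F^4 = \F^2 \otimes \F^2$, and note that $\im(\rhobar^c) = \im(\rhobar)$ (since $\rhobar^c(g) = \rhobar(cgc^{-1})$ with $cgc^{-1}$ ranging over $G_K$), so both contain $\SL_2(\F_p)$. The first and crucial step is to pin down the image. Since by hypothesis $\Proj(\rhobar)$ does not extend to $G_\Q$, the representations $\Proj(\rhobar)$ and $\Proj(\rhobar^c) = \Proj(\rhobar)^c$ are non-isomorphic over $G_K$; in particular $\rhobar \not\simeq \rhobar^c \otimes \chi$ for every character $\chi$. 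Applying Goursat's lemma to $(\rhobar,\rhobar^c)\colon G_K \to \GL_2(\F)\times\GL_2(\F)$ together with Dickson's classification of subgroups of $\SL_2(\F)$ containing $\SL_2(\F_p)$, one deduces that the image of $(\rhobar,\rhobar^c)$ contains $\SL_2(\F_q)\times\SL_2(\F_q)$, where $\SL_2(\F_q) = \im(\rhobar)\cap\SL_2(\F)$. Hence $\im(\psibar)$ contains the normal subgroup $N$ which is the image of $\SL_2(\F_q)\times\SL_2(\F_q)$ under the tensor map $\GL_2\times\GL_2\to\GL_4$, and the quotient $\im(\psibar)/N$ has order prime to $p$ (it is generated by determinant twists of $\rhobar$ and by the image of $\Gal(K/\Q)$).

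Next I would decompose the adjoint. Since $p$ is odd, $\mathfrak{gl}_2 = \F \oplus \mathfrak{sl}_2$, so as modules for $\SL_2\times\SL_2$ (hence for $N$),
$$\ad(\psibar) = \mathfrak{gl}_2 \otimes \mathfrak{gl}_2 = \F \,\oplus\, (\mathfrak{sl}_2\boxtimes 1) \,\oplus\, (1\boxtimes\mathfrak{sl}_2) \,\oplus\, (\mathfrak{sl}_2\boxtimes\mathfrak{sl}_2),$$
and $\ad^0(\psibar) = \mathfrak{sl}_4$ is the sum of the last three summands. Because $p > 7$, the module $\mathfrak{sl}_2 \simeq \Sym^2\F^2$ is absolutely irreducible for $\SL_2(\F_q)$, so all four summands are irreducible and pairwise non-isomorphic as $N$-modules. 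Conditions~(1)--(3) of the definition of bigness then follow by routine arguments modelled on~\cite{CHT}: $N$ is perfect (as $p\ge 5$) and $\im(\psibar)/N$ has order prime to $p$, so $\im(\psibar)$ has no non-trivial quotient of $p$-power order; $\ad^0(\psibar)$ has no $N$-invariants as each summand is a non-trivial irreducible; and for $H^1$, restriction--corestriction along $N$ (of index prime to $p$) reduces one to $H^1(\SL_2(\F_q)\times\SL_2(\F_q),\ad^0)$, which by Künneth vanishes on the $\mathfrak{sl}_2\boxtimes\mathfrak{sl}_2$ summand since $H^0(\SL_2(\F_q),\mathfrak{sl}_2)=0$, and on the $\mathfrak{sl}_2\boxtimes 1$ and $1\boxtimes\mathfrak{sl}_2$ summands reduces to $H^1(\SL_2(\F_q),\Sym^2\F^2)=0$, which holds for $p>7$ by the Cline--Parshall--Scott vanishing invoked in~\cite{CHT} (note that $\Sym^4$ does not occur here, so the obstruction at $p=7$ affecting the $\GL(3)$ case is not the binding one).

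It then remains to verify the eigenvalue condition in the strengthened, $2$-big, form. Take $t_1 = \diag(a,a^{-1})$, $t_2=\diag(b,b^{-1})$ with $a,b\in\F_p^\times$, and set $h = t_1\otimes t_2 \in N$; its eigenvalues on $\F^2\otimes\F^2$ are $ab,\,ab^{-1},\,a^{-1}b,\,a^{-1}b^{-1}$, with eigenvectors the coordinate vectors $e_i\otimes f_j$. A short arithmetic check shows one can choose $a,b$ so that these four eigenvalues are distinct and have pairwise distinct squares; this is exactly where $p>7$ (indeed $p\ge 11$) enters. For such an $h$ every eigenvalue has multiplicity one, and for an eigenvalue $\alpha$ with eigenvector $e_i\otimes f_j$ the composite $\pi_{h,\alpha}\circ(A\otimes B)\circ\iota_{h,\alpha}$ is the scalar $A_{ii}B_{jj}$. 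For each of the irreducible $N$-submodules $\F$, $\mathfrak{sl}_2\boxtimes 1$, $1\boxtimes\mathfrak{sl}_2$, $\mathfrak{sl}_2\boxtimes\mathfrak{sl}_2$ one may pick $A,B$ (with $A=I$ or $B=I$ where appropriate) making $A_{ii}B_{jj}\ne 0$, so $\pi_{h,\alpha}W\iota_{h,\alpha}\ne 0$; and since any irreducible $\F[\im\psibar]$-submodule $W$ of $\ad(\psibar)$ contains one of these four after restriction to $N$, the same holds for $W$. As $\alpha^2\ne\beta^2$ for distinct eigenvalues of $h$, this gives $2$-bigness.

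The hard part will be the first step: showing that the image of $\psibar|_{G_K}$ genuinely contains a product $\SL_2(\F_q)\times\SL_2(\F_q)$ acting via the tensor representation, which is precisely where one must use that $\Proj(\rhobar)$ does not extend to $G_\Q$ rather than just the irreducibility of $\rhobar\otimes\rhobar^c$; everything after that is bookkeeping patterned on~\cite{CHT} and~\cite{BLGHT}, together with the elementary computation selecting the regular semisimple element $h$.
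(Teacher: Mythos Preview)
Your argument follows essentially the same route as the paper's --- pin down a subgroup $\SL_2(k)\times\SL_2(k)$ (with the swap) of prime-to-$p$ index, decompose $\ad^0$ as a tensor product, reduce the $H^1$-vanishing to $H^1(\SL_2(k),\Sym^2)$, and verify condition~(3) on a single torus element --- and the added detail on Goursat and the non-extension hypothesis is welcome. There is, however, a slip in your choice of torus element, and it is instructive to compare with the paper's choice.

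Your claim that one can pick $a,b\in\F_p^\times$ with $ab,\ ab^{-1},\ a^{-1}b,\ a^{-1}b^{-1}$ having \emph{pairwise} distinct squares fails at $p=13$: writing $\F_{13}^\times\simeq\Z/12$ additively, the condition becomes $\alpha,\beta,\alpha+\beta,\alpha-\beta\not\equiv 0\pmod 3$, and since $\alpha,\beta\in\{1,2\}$ forces either $\alpha+\beta\equiv 0$ or $\alpha-\beta\equiv 0$, this is impossible. This is not fatal: the definition only asks for \emph{one} eigenvalue $\alpha$ whose square differs from the others, and your own verification shows that a single $\alpha$ (say $\alpha=ab$) serves for every $N$-irreducible summand. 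So you only need $a^4,b^4,(ab)^4\ne 1$, which for $p=13$ is achieved already by $a=b$ a generator of $\F_{13}^\times$. The proof survives, but not as written.

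The paper avoids this wrinkle by taking $g=(\delta,\gamma)$ with $\delta$ generating a \emph{split} Cartan and $\gamma$ a \emph{non-split} Cartan of $\SL_2(\F_p)$; then $\delta,\gamma$ have orders $p-1$ and $p+1$, and any relation $\delta^{4i}\gamma^{4j}=1$ forces $\delta^8=1$ or $\gamma^8=1$, which is impossible for $p>7$ uniformly. This is precisely why the hypothesis $\F_{p^2}\subset\F$ is imposed: the eigenvalues of $\gamma$ live in $\F_{p^2}$. Your split/split choice, once repaired, has the small bonus of not requiring that hypothesis at all.
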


\begin{proof} 
Write $V$ for the $4$-dimensional vector space underlying $\psibar$.
Recall that $G = \mathrm{im}(\psi) \subseteq \GL_4(\F)$ has $2$-big image if:
\begin{enumerate}
\item $G$ has no $p$-power quotient.
\item $H^i(G,\ad^0(V)) = 0$ for $i = 0$ and $i = 1$.
\item For every irreducible $G$-submodule $W$ of $\ad^0(V)
\subset \Hom(V,V)$, there
exists an $g \in G$ such that:
\begin{enumerate} 
\item The $\alpha$-generalized eigenspace $V_{g,\alpha}$
for $g$ is one dimensional for some $\alpha \in \F$.
\item For every other eigenvalue $\beta$ of $g$, $\beta^2 \ne \alpha^2$.
\item If $v \in V_{g,\alpha}$,  there is an inclusion
$v \in W(v)$.
\end{enumerate}
\end{enumerate}
If this property holds for a finite index subgroup $H \subseteq G$ of index
co-prime to $p$, then it holds for $G$.

By assumption, if $G$ is the image of $\psibar$, then
$G$ contains, with index co-prime to $p$, the group $H = \SL_2(k)^2 \rtimes \Z/2\Z$
for some field $\F_p \subset k \subset \F$. Condition (1) holds for $H$.

Write $V$ for the $4$-dimensional vector space underlying $\psibar$.
There  is a natural decomposition $V = X \otimes Y$ where the first $\SL_2(k)$
factor acts
on $X$ and the second on $Y$.
We find that 
$$\ad^0(V) =  W_6 \oplus W_{9}$$
for irreducible representations $W_6$ and $W_9$ of dimensions $6$ and $9$, respectively. Explicitly,
\begin{enumerate}
\item 
$W_6 = \ad^0(X) \oplus \ad^0(Y)$, 
\item $W_9 = \ad^0(X) \otimes \ad^0(Y)$.
\end{enumerate}
Let $\SC$ denote a split Cartan subgroup of $\SL_2(\F_p)$, and $\NS$ a non-split Cartan
subgroup of $\SL_2(\F_p)$.
There are isomorphisms
$$ \SC \simeq \F^{\times}_p, \qquad \NS \simeq (\F^{\times}_{p^2})^{\sigma = -1},$$
where $\sigma$ denotes Frobenius in $\F^{\times}_{p^2}$. 
Let $\delta$ denote a generator of $\SC$ and $\gamma$ a generator of
$\NS$. Then $g = (\delta,\gamma) \subset \SC \times \NS \subset \SL_2(\F_p)^2$ is naturally an element of $H$.
The eigenvalues of $g$ on $X \otimes Y$ 
are $\{ \delta \gamma, \delta \gamma^{-1}, \delta^{-1} \gamma, \delta^{-1} \gamma^{-1}\}$.
Suppose that the squares of two distinct elements of this set coincide.
Then there must be an equality
$$\delta^{4i} \gamma^{4j} = 1$$
where $i$ and $j$ are in $\{-1,0,1\}$ and are not both zero. 
Taking the product (respectively ratio) of this
element and its conjugate
$\delta^{4i} \gamma^{-4j}$, we deduce that either $\delta^8 = 1$ or $\gamma^8 =1$.
Yet $\delta$ and $\gamma$ have orders $p-1$ and $p+1$ respectively,
contradicting the assumption that $p > 7$. In particular, the squares of the eigenvalues
(and thus the eigenvalues) of $g$ are distinct.
Suppose that $W \subset \ad^0(V)$ is an irreducible representation such that
$W$ contains a non-zero $g$-invariant element $w$. Then $w$ induces a map
$w:V \rightarrow V$ which is $g$-equivariant.  Since all the eigenvalues of $g$ are
distinct, all the eigenvectors of $g$ are eigenvectors of $w$.
In particular, since $w$ is non-zero, there exists at least one eigenvector of $g$ with
eigenvalue $\alpha$ which does not lie in the kernel of $w$. If $V_{g,\alpha}$ is
the line generated by this eigenvector, and $v \in V_{g,\alpha}$, it follows that
$v$ is a non-zero multiple of $w(v)$ and thus $v \in W(v)$. By inspection, both
$W_6$ and $W_9$ contain $g$-invariant elements, and thus we have verified condition (3).
	
\medskip

We now verify condition (2). Since $\SL_2(k)^2$ is contained in $G$ with
index co-prime to $p$, it suffices to show that the cohomology vanishes for this group. This
is elementary for $H^0$.
Write $H = \Gamma_1 \times \Gamma_2$ where $\Gamma_1 = \SL_2(k)$ acts on $X$ and
$\Gamma_2 = \SL_2(k)$ acts on $Y$.
The modules $W_6$ and $W_9$ decompose as $H$ modules of the form
$A \otimes B$ where $A = \Sym^{2i}(X)$ and $B = \Sym^{2j}(Y)$ for 
$i$ and $j$ in $\{0,1\}$, where at least one of $A$ and $B$ is non-trivial.
By symmetry, we may assume that $B$ is non-trivial. Hence
$A \otimes B$ has no invariants as an $\Gamma_2$-module, and thus, by 
inflation-restriction,
$$H^1(\Gamma_1 \times \Gamma_2,A \otimes B) \hookrightarrow H^1(\Gamma_1,A \otimes B) =
 B \otimes H^1(\Gamma_1,A) = 
B \otimes H^1(\SL_2(k),\Sym^{2i} X) = 0,$$
as the latter group vanishes for $i = 0,1$ and $p > 5$.
\end{proof}

\end{document}